\DeclarePairedDelimiterX{\norm}[1]{\lVert}{\rVert}{#1}
\newcommand{\BR}{\mathbb{R}}
\newtheorem{theorem}{Theorem}
\newtheorem{definition}[theorem]{Definition}
\newtheorem{example}[theorem]{Example}
\newtheorem{notation}{Notation}[section]
\newtheorem{proposition}[theorem]{Proposition}
\newtheorem{remark}[theorem]{Remark}
\newenvironment{proof}[1][Proof]{\textbf{#1.} }{\ \rule{0.5em}{0.5em} \vspace{1ex}}
\newcommand{\R}{\mathbb{R}}
\newcommand{\bx}{x}
\newcommand{\bs}{\boldsymbol}
\begin{document}

\preprint{APS/123-QED}

\title{Polynomial Matrix Inequalities within Tame Geometry}

\author{Christos Aravanis}
\email{c.aravanis@sheﬃeld.ac.uk}
\affiliation{The University of Sheffield International College, United Kingdom}

\author{Johannes Aspman}
\email{aspmanj@maths.tcd.ie}
\affiliation{School of Mathematics and Hamilton Mathematics Institute, \\Trinity College Dublin, Ireland}

\author{Georgios Korpas}
\email{georgios.korpas@fel.cvut.cz}
\author{Jakub Marecek}%
 \email{jakub.marecek@fel.cvut.cz}
\affiliation{%
Department of Computer Science and Artificial Intelligence Center, \\ Czech Technical University in Prague, Czech Republic 
}%


\date{\today}

\begin{abstract}
Polynomial matrix inequalities can be solved using hierarchies of convex relaxations,
pioneered by Henrion and Lassere. 
In some cases, this might not be practical, and one may need to resort to methods 
with local convergence guarantees, whose development has been rather \emph{ad hoc}, so far.
In this paper, we explore several alternative approaches to the problem, 
with non-trivial guarantees available using results from tame geometry. 
\end{abstract}

\maketitle


\section{Introduction}\label{sec:level1}

Polynomial matrix inequalities (PMI) are a class of (generally) non-convex optimization problems  \cite{VanAntwerp2000,Henrion2006},
with 
extensive applications in control theory \cite{henrion2005solving,Henrion2006},
structural engineering and design optimization \cite{tyburec2021global},
and 
quantum information theory \cite{bondar2022recovering,Aravanis2022}.
Unfortunately, even the verification of local optimality of PMI is an NP-hard problem \cite{Murty1987},
and polynomial-time solvers with guarantees of global convergence to global optima are hence unlikely. 

Nevertheless, there are several approaches to solving PMIs in the literature, which we survey below.
These include polynomial-time methods \cite{jarre2012elementary,kocvara2005penbmi,henrion2005solving,diehl2006loss,freund2007nonlinear,sun2008rate,andreani2020optimality,andreani2021optimality}, which have only guarantees of local convergence to local optima,
developed in an ad-hoc fashion. 
There are also methods based on solving ever larger convexifications \cite{Scherer2005,Henrion2006,D2018ANO,Dinh2021,Zheng_2021}, which have guarantees of asymptotic convergence to global optima.
A number of branch-and-bound  \cite{goh1995global,apkarian2000robust} and branch-and-cut \cite{fukuda2001branch} approaches have also been considered. 
Overall, however, progress has been rather intermittent.

In this paper, we explore several approaches to the problem, which have not been explored previously, 
with guarantees on their performance (e.g., local convergence, rates thereof) available using results from tame geometry \cite{davis2020stochastic,bolte2021conservative}, 
a fast-developing field on the interface of topology, functional analysis, and optimization.
We also try to reason about the suitability of the proposed approaches in various settings. 

Technically, this relies on an extension of the reasoning about the
tame nature of the feasible set of a constrained optimization problem 
and the graph of an unconstrained optimization problem 
to reasoning about tame representation of a (generalized) Lagrangian of a constrained optimization problem,
which could be of some interest beyond PMI.

\subsection{The Role of Tame Geometry}

In \cite{grothendieck1997around}, Grothendieck advocated for the construction of a theory of tame topological and geometrical structures motivated by several issues within analysis. He has also initiated a program for its development. 
An important notion for the development of tame geometry is the stratification of smooth manifolds, that is, colloquially, the breaking up of manifolds into submanifolds of lower dimension. As a matter of fact, many ``nice" topological spaces, including moduli spaces of curves and, more prominently, solution sets of systems of polynomial equations and semialgebraic sets, provide a landscape where stratifications are straightforward and understandable. Semialgebraic sets are very natural in convex optimization theory and are prototypical examples of sets with tame topology, also known as tame sets. Tame sets are now understood more broadly in the context of o-minimal structures, which we introduce in Sec. \ref{sec:tame_geometry}. 
As we will explain, a set is called tame if it is definable on an o-minimal structure, and similarly, a function is tame if its graph corresponds to a tame set. Many different o-minimal structures have been constructed that generalize semialgebraic sets.
Because of the tameness of these spaces, important results in analysis and optimization theory follow. 

Tame geometry and the theory of o-minimal structures have been used extensively for obtaining new results in geometric topology \cite{vandenDries1996}, analysis and subgradient calculus \cite{bolte2007clarke,Attouch2011,bolte2020long,bolte2021conservative} as well as complex algebraic geometry and Hodge theory \cite{Bakker2018,Bakker2018b,Bakker2020,Bakker2021} with implications reaching as far out as consistency arguments for effective field theories in the context of string theory \cite{Grimm2021,Grimm2022}.

Tame geometry has also found important applications in optimization theory. It is now understood that semialgebraic functions, semianalytic functions, and more generally functions definable in o-minimal structures form a class of functions in which first-order optimization methods are applicable \cite{davis2020stochastic,bolte2021conservative} in the following sense: 
for any locally-Lipschitz Whitney stratifiable function, there exists a descent guarantee along any subgradient trajectory, which, in turn, results in convergence guarantees for stochastic subgradient methods \cite{Helton2003}. 
Similarly, Newton's method converges fast \cite{bolte2009tame} on locally-Lipschitz tame functions, which turn out to be semismooth.
To appreciate this, consider the three qualitatively different types of behavior, as far as convergence guarantees are concerned:
\begin{itemize}
    \item Convex and tame: convergence to optimality along a finite trajectory \cite{Kurdyka2000}.
    \item Convex and non-tame: many methods fail to converge \cite{Bolte2020}, due to the lack of a finite trajectory.
    \item Tame and non-convex: methods adding noise to subgradients can produce global optima asymptotically. Without the addition of noise,  we obtain convergence to a critical point in finite time. 
\end{itemize}
See \cite{davis2020stochastic} and \cite{bolte2021conservative} for two state-of-the-art analyses. We expect many more algorithms and their analyses to appear,
especially in relation to optimization in deep learning, where an important class of problems \cite{bolte2021conservative} has
been shown to be tame. 


In this paper, we introduce tame representations of non-convex polynomial matrix inequality problems by formulating the generalized Lagrangian $\widetilde{\mathcal{L}}$ associated to the Lagrangian $\mathcal{L}$ of a PMI, as exposed in Eqs. \eqref{eq:LagrangianA} and \eqref{eq:LagrangianB}. Our approach is inspired by the representability of subsets of $\mathbb{R}^n$ as linear matrix inequalities and semidefinite programs, except reversed in direction. 

To obtain some intuition, consider that representability in linear matrix inequalities \cite{Helton2003}, sometimes also known as positive-definite-representable functions \cite[Section 6.4.2]{nesterov1994interior}.
This amounts to finding symmetric matrices $\{A_i\}_{i=0}^n \in \mathbb{R}^{m\times m}$ such that the set defined as $S = \{ x\in \mathbb{R}^n \,|\, A\succeq 0 \}$ exists, with $A = A_0+\sum_{i}A_ix_i$. 
Notice that one could easily extend these results towards representability in polynomial matrix inequalities, when considering the fact that polynomial matrix inequalities are representable in linear matrix inequalities.
This comes from the work of Henrion and Lassere \cite{Henrion2006}, which we explain in Section \ref{sec:convexifications}.

The reverse problem can also be formulated, but it only makes sense if one asks for some further requirements. Given $A \succeq 0$, it asks whether there exists a set $S$, defined as above, such that it has a certain topological or geometrical property. We are interested in such a question where this special property is the tameness that we discussed in the previous paragraph. We define the notion of \emph{tame representation of a polynomial matrix inequality} which, in general, corresponds to finding a reformulation of the polynomial matrix inequality such that the graph of the, unconstrained, objective function is a tame set. We provide three examples of such representations using the characteristic polynomial of the polynomial matrix inequality, using factorization for a bilinear matrix inequality, and the log-det barrier. 

The interest and potential in tame representations originate in the ability to use first-order methods \cite[e.g.]{apkarian2000robust,kovcvara2003pennon,kocvara2005penbmi}, with theoretical convergence guarantees. For example, locally Lipschitz functions defined over an o-minimal structure are semismooth, and within this class the asymptotic behavior of the $k$-th step of the Newton method is known \cite{bolte2010characterizations}. More recently, fundamental work by Bolte and Pauwels \citep{bolte2021conservative} has shown that first-order methods, such as the mini-batch stochastic gradient descent, can be used with theoretical convergence guarantees for tame functions.

This paper is structured as follows. In Sec. \ref{sec:tame_geometry} we provide a self-contained background to o-minimal structures with various examples aiming to provide intuition to the reader unfamiliar with these notions. In Sec. \ref{sec:PMIs} we provide the necessary background on polynomial matrix inequalities and review the current state-of-the-art family of approaches for solving them. Sec. \ref{sec:tamereps} contains the main contributions of this paper with several definitions, propositions, and theorems with respect to the tame representations of polynomial matrix inequalities. Finally, we summarize and conclude this paper in Sec. \eqref{sec:discussion}.

\section{Background: Semialgebraic and Tame Structures}\label{sec:tame_geometry}
The goal of this paper is to bring together tame structures and PMIs. In this section, we collect the necessary definitions about tame structures. Since semialgebraic structures are core examples of tame structures, we include related definitions too.

\begin{notation}
In the following, the $n$-dimensional Euclidean space is denoted by $\R^{n}$. For a subset $X \subseteq \R^{n}$ denote by $\mathcal{P}(X)$ its powerset, that is, the set of all subsets.  The graph of a map $f\colon X\to Y$ is the set $\Gamma(f)$ defined by \[\Gamma(f)=\Big\{(x,y)\in X\times Y:y=f(x)\Big\}.\]

\end{notation}

\subsection{Semialgebraic structures }

In this section, we introduce the notions of semialgebraic sets and of semialgebraic functions necessary for what follows. For more details, see \cite[Chapter 2]{bochnak2013real}.

A subset $X\subseteq \R^{n}$ is said to be semialgebraic if it is a Boolean combination of sets of solutions of polynomial equations and polynomial inequalities with real coefficients. More precisely, if $X$   can be represented as
\begin{align}
    X = \bigcup_{i=1}^s \bigcap_{j=1}^t \left\lbrace \bx\in \R^p: P_{ij}(\bx) \star 0 \right\rbrace,
\end{align}
where $P_{ij}$ are real polynomials, $\star$ is one of $>, <, =$ and $s$ and $t$ are finite numbers.

Examples of semialgebraic sets are algebraic sets, meaning sets of all zeros of real polynomials on $n$ variables. More examples of semialgebraic sets are obtained by taking finite unions, finite intersections, Cartesian products, and complements of semialgebraic sets. 
 
A function $f\colon X\to Y$ between semialgebraic sets is semialgebraic if its graph $\Gamma(f)$ is a semialgebraic set. For example, real polynomials in $n$ variables are semi-algebraic functions. It is worth mentioning that, naturally, the exponential map, $\exp:\R\to\R$, is not a semialgebraic function.








\subsection{Tame structures and definable maps}

In this section, we introduce tame structures also known as $o$-minimal structures which will be of particular interest in Section \ref{sec:tamereps}.  We start with the definition of an $\R$-structure and then a tame structure is defined as an  $\R$-structure which satisfies an extra condition. For more details on tame structures and its applications, see \cite{Dries98}, \cite{vandenDries1996} \citep[Chapter 1]{Nicol}. 

\begin{definition} \label{def:R-str}
An \textbf{$\R$-structure} $S$ is a collection of sets $\{S^{p}\}_{p\ge 1}$ where each $S^{p}$ is a subset of the powerset of $\R^{p}$ such that the following hold:

\begin{itemize}
    \item [$A_{1}:$] Each $S^{p}$ contains all zero sets of finite collections of polynomials in real variables $p$.
    
     \item [$A_{2}:$] For every linear map $L\colon \R^{p} \to \R$, the half-plane $\{x \in \R^{p}| L(X)\ge 0\}$ belongs to the set $S^{p}$.
     
     \item [$P_{1}:$] For every $p\ge 1$, the $S^{p}$ is closed under set-theoretic unions, intersections, and complements.
     
      \item [$P_{2}:$] If $X \in S^{p}$ and $Y \in S^{q}$ then $X\times Y\in S^{p+q}$.
      
      \item [$P_{3}:$] If $T\colon \R^{p}\to \R^{q}$ is an affine map and $X \in S^{p}$, then $T(X) \in S^{q}$.
     
\end{itemize}

\end{definition}



\begin{example}\label{ex:semialg}
One of the most fundamental examples of an $\R$-structure is the collection of real semialgebraic sets $\mathbb{R}_{\rm{semialg}}$, given as polynomial inequalities in real variables. 
\end{example}

\begin{definition}
Let $S$ be a fixed $\R$-structure.

\begin{enumerate}
    \item A set $X$ is said to be \textbf{$S$-definable} if $X$ belongs to some $S^{p}$. 
    
    \item A map $f\colon X \to Y$ between $S$-definable sets is said to be an \textbf{$S$-definable map} if its graph $\Gamma(f)$ is $S$-definable. 
\label{def:definable}
\end{enumerate}
\end{definition}

\begin{example} In the following several examples of $S$-definable sets and $S$-definable functions are provided.

\begin{enumerate}
    \item Any polynomial with real coefficients is a $S$-definable map.
    
    \item Let $X\xrightarrow{f}Y\xrightarrow{g}Z$ be two $S$-definable maps. Then its composition $g\circ f$ is an $S$-definable map.
    
    \item For a $S$-definable map $f(x, y)$, the  $ \inf_y f(x, y)$ is $S$-definable.

    \item The image and pre-image of a definable set under a definable map are both definable sets.
    
    \item \label{inverse_definable} The inverse of an injective and definable function is also a definable function, see \cite[Lemma 2.3 (iii)]{Dries98}
    
\end{enumerate}

\end{example}

New $\R$-structures are obtained from old ones as follows: for an $\R$-structure $S$ and a collection $A=\{A_{p}\}_{p\ge 1}$  where $A_{p} \in \mathcal{P}(\R^{p})$, denote by $S(A)$ the smallest structure that contains $S$ and the sets $A_{p}$. Then, we say that $S(A)$ is obtained from $S$ by adjoining the collection $A$. 

\begin{example}\label{ex: res Analytic} A \textbf{restricted real analytic function} is
a function $f\colon \R^{p} \to \R$ with the property that there exists a real analytic function $f$ defined in an open neighborhood $U$ of the hypercube $C_n \coloneqq [-1,1]^n$ such that
\begin{align}
   f &=  \begin{cases}
      \widetilde{f}, &   \text{if $x \in C_{n}$} \\
       0, &   \text{if $x \in \R^{p} \backslash  C_{n}$}.
    \end{cases}
\end{align}

Adjoining the structure $\mathbb{R}_{\rm{semialg}}$ with the graphs of all restricted real analytic functions, one obtains the $\R$-structure $\R_{\rm{an}}$.
\end{example}
 
\begin{example} \label{ex: exp}
Adjoining the structure $\mathbb{R}_{\rm{semialg}}$ with the graph of the exponential map $\R \to \R, t\mapsto e^{t}$ one obtains the $\R$-structure $\mathbb{R}_{\rm{exp}}$.
\end{example}

We are really interested in structures that are tame, also known as $o$-minimal which we will define now.

\begin{definition} 
\label{def:tamestructure}
Let $S$ be an $\R$-structure, as given in Definition \ref{def:R-str}. Then, $S$ is said to be a \textbf{tame structure} or an \textbf{o-minimal structure} if the following property is satisfied:

\begin{itemize}
    \item [$T:$] Any set $X \in S^{1}$ is a finite union of singletons $\{*\}$ and open intervals $(a,b)$ where $-\infty \le a < b \le \infty$.
\end{itemize}

\end{definition}

In other words, tame structures are $\R$-structures which satisfy an extra property $T$ as defined above.

\begin{example}Examples of tame structures are the following.
\label{ex:tamestructure}
\begin{enumerate}
    \item The collection $\mathbb{R}_{\rm{semialg}}$ of real semialgebraic sets, defined in  Example \ref{ex:semialg}.
    \item  The $\R$-structure $\mathbb{R}_{\rm{an}}$, defined in Example \ref{ex: res Analytic}.
    \item The $\R$-structure  $\mathbb{R}_{\rm{exp}}$, defined in Example \ref{ex: exp}.
\end{enumerate}

\end{example}

\begin{notation}
A function definable in some tame structure $S$ will be referred to as a tame map.
\end{notation}

\begin{figure}
    \centering
    \includegraphics[scale=0.82]{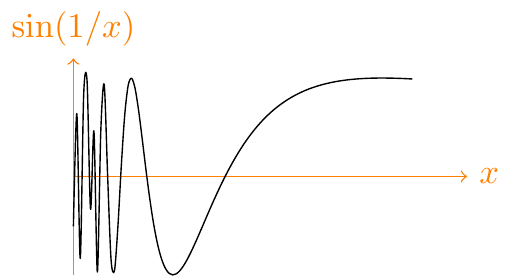}
    \includegraphics[scale=0.82]{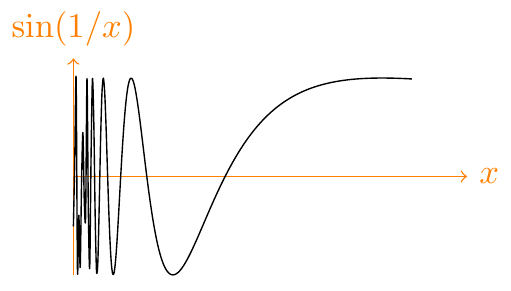}
    \includegraphics[scale=0.82]{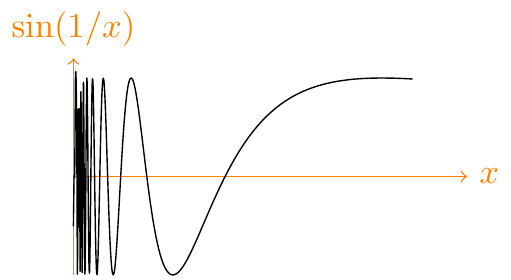}
    \includegraphics[scale=0.82]{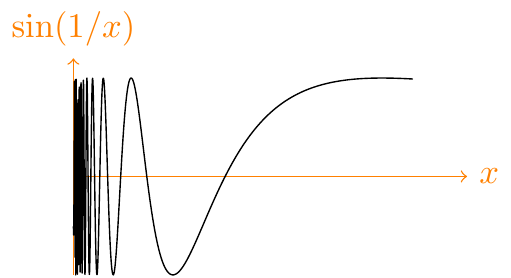}
    \caption{Consider the function $f(x)=\sin(1/x)$ and its graph $\Gamma(f)=\left\{\left(x, \sin \frac{1}{x}\right): x \in(0,1]\right\} \cup\{(0,0)\}$.  $\Gamma(f)$ is not a tame structure in $(0,1)$ since its closure $\bar{\Gamma}(f) = \partial \Gamma(f) \cup \Gamma(f)$ is connected but not path-connected, the boundary $\partial\Gamma(f)$ has the same dimension as $\Gamma(f)$ and it is not a stratifiable manifold.  In this figure we sampled $f$ 100, 250, 500 and 1000 times (left,right,top,bottom). It is clear that near the $y$-axis, this graph is very badly behaved. This example is borrowed from \cite{Klinger}. A similar example is that of the Volterra function \cite{PonceCampuzano2015}.}
    \label{fig:sine}
\end{figure}
\begin{figure}
\centering
\includegraphics[scale=1.0]{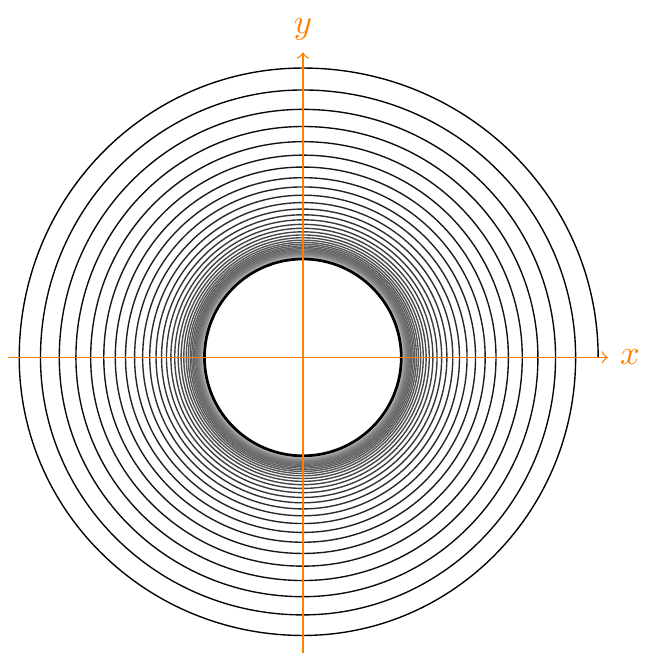}
\caption{Graph $\Gamma(\gamma)$ of the parametrized curve $\gamma(s)= \braket{(1+s^{-1})\cos(s),(1+s^{-1})\sin(s)}\cap \mathbb{S}^1$. This is a similar example to Fig. \ref{fig:sine} in the sense that it is a non-path connected space (there is no path connecting the path connected spiral to the unit circle) and thus not definable in an o-minimal structure.}
   \label{fig:my_label}
\end{figure}

\begin{figure}
    \centering
    \includegraphics{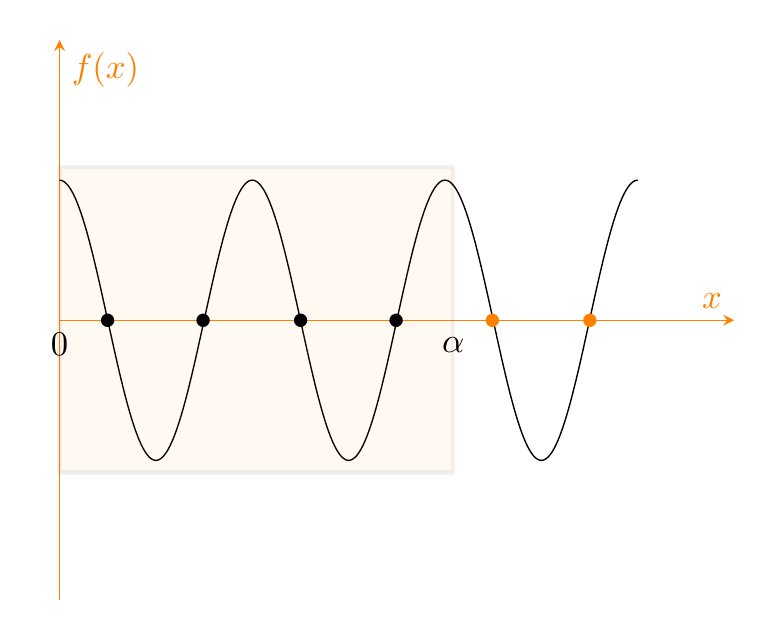}
    \caption{The graph of $f(x) =\sin(x)$ is definable for $x \in [0,\alpha]$, $\alpha \in \mathbb{R}$ with finite number of points (the zeros of $f(x)$). Recall that definability requires that any projection on $\mathbb{R}$ maps the graph of the function to finitely many points which will not be the case for $\alpha \to \infty$.}
    \label{fig:sin}
\end{figure}






\section{Background: polynomial matrix inequalities}\label{sec:PMIs}
This section is devoted to introducing the types of problems we are interested in, namely polynomial matrix inequalities. After introducing the general definitions, we discuss some related previous results. 

\subsection{Generalities}


Let $\mathcal{S}^{m} \subset \mathbb{R}^{m \times m}$ denote the space of real $m \times m$ symmetric matrices.  We denote the subset of positive definite (semidefinite) $m \times m$ matrices by $\mathcal{S}^{m}_{++} \subset \mathcal{S}^{m}$ ($\mathcal{S}^{m}_{+} \subset \mathcal{S}^{m}$). For a matrix $A \in \mathcal{S}^m_{++}$ ($A\in \mathcal{S}^m_{+}$) we write $A\succ 0$ ($A\succeq 0$). Finally, a semialgebraic set $\mathcal{K}$ is defined as
\begin{equation}
\label{e:semialgebgraic-set}
\mathcal{K} = \left\{ x \in \mathbb{R}^k:\; g_1(x)\geq 0,\, \ldots,\, g_q(x) \geq 0 \right\},
\end{equation}
where $g_1,\ldots, g_q \in \mathbb{R}[x]$.

\begin{definition}[PMI] Let $y\in \mathbb{R}^\ell$, $x\in \mathbb{R}^k$,  a polynomial $b \in \mathbb{R}[y]$ and a polynomial matrix  $P: \mathbb{R}^k \times \mathbb{R}^\ell \to \mathcal{S}^m$. 
Then, a polynomial matrix inequality optimization problem is the following optimization problem:
\begin{align}\label{problem:pmi}
    \begin{array}{rll}
B^{*} \coloneqq & \inf_{y\in \mathbb{R}^l}  b(y) \quad  \text { such that } \quad   P(x,y) \succeq 0.
\end{array}
\end{align}
\end{definition}

We will sometimes refer to the polynomial $b$ in the above definition as a \emph{cost} function and to a polynomial matrix inequality optimization problem as a PMI problem.



In general,  $P(x,y)$ is defined by $P_0(x) - \sum_{i=1}^\ell P_i(x)y_i$ and $\| y\|_{\infty}<1$. In general, verifying polynomial matrix inequalities is an NP-hard problem \cite{Murty1987}, and Prob. \eqref{problem:pmi} is generally intractable.

\begin{definition}
 A feasible set for a PMI is the subset of the space of search variables that satisfies the PMI condition. In the notation of \eqref{problem:pmi}, this means the subset of $\BR^l$ such that the optimization problem has a solution that satisfies the PMI constraint $P(x,y)\succeq 0$.
\end{definition}

A few remarks are due for some special cases of Prob. \eqref{problem:pmi}.

\begin{enumerate}
    \item[\emph{(1)}] If $\deg(b) = \deg(P) = 1$, then Prob. \eqref{problem:pmi} corresponds to a convex linear matrix inequality (LMI). Precisely, given $y\in \mathbb{R}^n$ and $\{ P_i\}_{i=0}^n \in \mathcal{S}_+^m$, an LMI takes the form
    \begin{align}\label{eq:LMIex}
        P_0 + y_1P_1 + \ldots + y_n P_n \succeq 0,
    \end{align}
    and the set of real vectors $y \in \mathbb{R}^n$ that satisfy the matrix inequality above define a spectrahedron \cite{Henrion2015}.

  The set
of real vectors x such that the pencil is positive semidefinite is a convex semialgebraic set called spectrahedron, described by a linear matrix inequality (LMI)

    \item[\emph{(2)}] If $\deg(P)=2$ and no square terms appear in it, then $P$ takes the form \begin{align}\label{}
    P_0 + \sum_{i=1}^{m} x_i F_i + \sum_{j=1}^{n} y_j G_j + \sum_{i=1}^{m} \sum_{j=1}^{n} x_i y_j H_{ij}
\end{align}for $F_i,G_j,H_{ij}\in \mathcal{S}^m$. In this case, Problem \eqref{problem:pmi} is referred to as a bilinear matrix inequality (BMI). 
    
\item[\emph{(3)}] If the matrix $P(x,y)=:X$ is a symmetric positive definite matrix,  i.e. $X \in \mathcal{S}_{++}^m$, then Problem \eqref{problem:pmi} is transformed to
\begin{align}\label{problem:sdmi}
    \begin{array}{rll}
X^{*} \coloneqq & \inf_{X} & Q(X) \quad  \text { such that } \quad  X \succ 0, \
\end{array}
\end{align}
where $Q$ is a matrix-valued polynomial. Typically, this type of problem comes with some additional constraint, for example
\begin{equation}
\begin{aligned}\label{problem:sdmiTr}
X^{*} \coloneqq & \inf_{X}  Q(X) \\  
\text {s.t. }  &X \succeq  0,\\ 
&\text{Tr}\left( X\right)= 1.
\end{aligned}
\end{equation}

\item[\emph{(4)}]
If $P$ has a chordal sparsity graph with maximal cliques $\mathcal{C}_1,\ldots, \mathcal{C}_t$, then $P$ can be decomposed as \cite{Zheng_2021}
\begin{align} \label{problem:sparse}
    P(x,y) = S_0(x) + \sum_{j=1}^k g_j(x,y)S_j(x),
\end{align}
for $m\times m$ sum-of-squares (SOS) polynomial matrices $\{ S_i\}_{i=0}^k$ where $S_i(x) = H(x)^\top H(x)$ for some polynomial matrix $H(x)$.
\end{enumerate}

\subsection{Convex Relaxations of PMIs}
\label{sec:convexifications}

The objective function of a PMI is a polynomial, by definition, on an indeterminate $X\in \mathcal{S}_+^m$. One can leverage the power of polynomial optimization techniques, whose overview is in Table~\ref{table:summary-pop}, to solve PMIs. Specifically, one can use convergent relaxations of PMIs \cite{Henrion2006} based on the scalar convergent relaxations of SDPs \cite{lasserre2001global}. This method is global because it can solve PMI problems even when
finite convergence occurs, while, in parallel,  it provides a numerical certificate of global optimality. However, it is also local, given that the convex relaxations utilize SDPs. 


In this section, we will review the moment relaxations of Lassere \cite{lasserre2001global} for PMI problems such as Problem \eqref{problem:pmi}, following
Henrion and Lassere \cite{Henrion2006}. 
Let $\bs b_d$ denote the vector of all possible monomials, up to degree $d$, constructed out of the entries of $x \in \mathbb{R}^n$. That is, the entries of $\bs b_d$ are monomials in $\mathbb{R}[x_1,\ldots, x_n]$, for $x_i \in \mathbb{R}$,
\begin{equation}
    \begin{aligned}
    \bs{b}_d(x)^\top &= (1~x_1~x_2~\ldots~x_n~x_1^2~x_1x_2~x_1x_n~\ldots \\
             & \qquad x_2~x_2x_3~\ldots~{x_n^2} ~\ldots~\ldots x_n^d)^\top.
\end{aligned}
\end{equation}
This vector can be used to construct the order-$d$ generalized Hankel matrix
\begin{align}\label{HankelX}
    \bs{b}_d(x) \bs{b}_d(x)^\top=\left(\begin{array}{ccccc}
1 & x_{1} & x_{2} & \cdots & x_{n}^{d} \\
x_{1} & x_{1}^{2} & x_{1} x_{2} & \cdots & x_{1} x_{n}^{d} \\
x_{2} & x_{1} x_{2} & x_{2}^{2} & \cdots & x_{2} x_{n}^{d} \\
\vdots & \vdots & \vdots & \ddots & \vdots \\
x_{n}^{d} & x_{1} x_{n}^{d} & x_{2} x_{n}^{d} & \cdots & x_{n}^{2 d}
\end{array}\right). 
\end{align}
For what follows, consider the limit $d\to \infty$ such that $\lim_{d\to \infty}\bs{b}_d \coloneqq \bs{b}$. Let $y = \{y_a\}_{a\in \mathbb{N}^n}$ be a sequence of indeterminates labeled with the entries of $\bs{b}(x)$. The sequence $\{y_a\}_{a\in \mathbb{N}^n}$ is referred to as the \emph{moment variable}, by defining $y_a \coloneqq \int {x^a d\mu}$, for a Borel measure $\mu$.

Similarly to the order-$d$ generalized Hankel matrix, we can define the order $d$-moment matrix
\begin{align}\label{eq:gen_Hank}
    M_{d}(y):=\left(\begin{array}{cccc}
y_{(0, \ldots, 0)} & y_{(1, \ldots, 0)} &  \cdots & y_{(0, \ldots, 1)} \\
y_{(1, \ldots, 0)} & y_{(2, \ldots, 0)} &  \cdots & y_{(1, \ldots, d)} \\
y_{(0,1, \ldots, 0)} & y_{(1,1, \ldots, 0)} & \cdots & y_{(0,1, \ldots, d)} \\
\vdots & \vdots &  \ddots & \vdots \\
y_{(0, \ldots, d)} & y_{(1, \ldots, d)} &  \cdots & y_{(0, \ldots, 2 d)}
\end{array}\right). 
\end{align}
As before, for what follows, consider the limit $d\to \infty$ and denote $\lim_{d\to \infty}M_d(y) \coloneqq M(y)$. Similarly, consider an infinite-dimensional basis represented by $\bs{b}(x)$ or, equivalently, $y_{a}$.
Any polynomial $p\in \mathbb{R}[x_1,\ldots, x_n]$ can then be identified with the vector of its coefficients labelled by $a\in \mathbb{N}^n$ as $p(x) = \sum_{a\in \mathbb{N}^n} p_a x^a = \braket{\bs{p},\bs{b}}$. Similarly to this construction, we can define the linear mapping $L_{y}(p) = \braket{p,y}$, while for polynomials $p,q\in \mathbb{R}[x_1,\ldots, x_n]$ we can define the bilinear mapping $L_{y}(p,q) = \braket{p,M(y)q}$. It is straightforward to confirm that Eq. \eqref{eq:gen_Hank} is equivalent to 
\begin{align}
    M(y) = \int \bs{b}\bs{b}^\top d\mu
\end{align}
It immediately follows that, for any two polynomials $p$ and $q$, $L_y(pq) = \int pq d\mu$.
For $d$ finite, corresponding to monomial basis $\bs{b}_d \subset \bs{b}$ of degree at most $d$ and moment matrix $M_d \subset M$ of order $d$, it follows that $M_d(y) \succeq 0$ since for any $p \in \mathbb{R}_d[x]$, $\braket{p,p}_y = \braket{\bs{p}, M_d(y) \bs{p}} = \int p^2 d\mu \geq 0$. Similarly, for more general moments, if the measure $\mu$ of $y$ is in the semialgebraic set defined as $\{ x\in \mathbb{R}^n \, | \, g_i(x) \geq 0\}$, $\braket{p,p}_{g_iy} = \braket{\bs{p}, M_d(g_iy) \bs{p}} = \int g_ip^2 d\mu \geq 0$, for all $i$, and as a result we have that all truncated moment matrices are positive semidefinite.

For matrix variables, we consider the PMI optimization Prob. \eqref{problem:pmi}, where $P: \mathbb{R}^{n} \rightarrow \mathcal{S}^{m}$ such that each entry $P_{i j}(x)=P_{j i}(x)$ of the matrix $P(x)$ is a polynomial in $\mathbb{R}[x]$. The semialgebraic set $\mathcal{K}$ of Eq. \eqref{e:semialgebgraic-set} can be equivalently written as
\begin{align}
    \mathcal{K}:=\left\{x \in \mathbb{R}^{n}: G(x) \succeq 0\right\} .
\end{align}
For a polynomial mapping $P: \mathbb{R}^{n} \rightarrow \mathbb{R}^{m}$ of degree at most $k$, we write $x \mapsto P(x)=\bs{P} \bs{b}_k(x) \in \mathbb{R}^{m}$, for some $m \times s_{k}$ matrix $\mathbf{P}$, where $s_{k}$ is the dimension of the vector space $\mathbb{R}_{k}[x]$. 

Similarly to the definition of the moment matrix for scalar variables, let $M_{k}(y) \coloneqq \{y_{\alpha+\beta} \}_{|\alpha|,|\beta|\leq k}$, be the order $k$ moment matrix associated with a sequence $y$, and let $M_{k}(G y)$ denote the localizing matrix that, by abuse of notation, can be written as
\begin{align}
    M_{k}(G y) &= L_{y}\left( \bs{b}_{k} \bs{b}_{k}^{\top} \otimes G\right),
\end{align}
$|\alpha|,|\beta| \leq k$, where $\otimes$ stands for the Kronecker product.

As in the scalar case, associated to a parent non-convex program, we consider the following $k$ truncated linear problem
\begin{align}
    f^{(k)}=\min_{y} & \quad L_{y}(f)\\
    \text{s.t.} & \quad  y_{0}=1 \\
                & \quad M_{k}(y) \succeq 0 \\
                & \quad M_{k-d}(G y) \succeq 0,
\end{align}
where $M_{k}(y)$ and $M_{k-d}(G y)$ are the $k$-th order truncated moment and localizing matrices, respectively, associated with the sequence $y$ and the matrix $G$. We say that this program is a relaxation of the PMI Prob. \eqref{problem:pmi}, with $f^{(k)}\leq f^*$ for all $k$. Using results from \cite{Hol04sumof,6f374c9990f14911a9cd629936a86c44}, Henrion and Lassere are able to prove that as $k \to \infty$, $f^{(k)}$ approaches $f^*$ generalizing the Lassere/SOS convergent relaxations to PMIs.


Since \cite{Henrion2006}, several improvements, based on different Positivstellens{\"a}tze, have been put forward, as summarized in Table \ref{table:summary-pop}. These state-of-the-art methods greatly improve the practicality of the methods of \cite{Henrion2006} and provide strong theoretical convergence guarantees. However, they do increase the complexity of the implementation considerably since they do not scale well given their super-exponential or even unbounded runtime originating, for example, on the non-necessarily bounded level of the relaxation hierarchy. Trying to find a computationally less resource intensive alternative, we formulate the notion of \emph{tame representations} of PMIs which do involve (weaker) theoretical convergence guarantees, like the Henrion-Lassere family of methods, for polynomial time methods such as the Newton method, gradient flows, and subgradients.

\subsection{PMIs as Non-linear SDPs}

PMIs should also be seen as non-linear SDPs \cite{yamashita2015survey}. There, first-order optimality conditions are well studied \cite{jarre2012elementary,andreani2020optimality,andreani2021optimality}. 
Although the direct application of the first-order optimality conditions \cite{fiala2013penlab} provides weaker \emph{local} convergence guarantees \cite{diehl2006loss,freund2007nonlinear,sun2008rate} than the global convergence guarantees of the previous section, 
they have a certain appeal to practitioners, who often prefer to rely on methods sacrificing   guarantees of convergence to global optima for the sake of time efficiency. 
Indeed, the present-best numerical methods for PMIs \cite{kovcvara2003pennon,kocvara2005penbmi,henrion2005solving} come only with local convergence guarantees. 



\begin{table*}[t]
\centering
\setlength{\tabcolsep}{6pt}
\begin{tabular}{l|ccc} \hline \hline
\multirow{2}{*}{Positivstellenatz} & \multirow{2}{*}{POP} & \multirow{2}{*}{\begin{tabular}[c]{@{}l@{}}PMIs\end{tabular}} & \multirow{2}{*}{\begin{tabular}[c]{@{}l@{}}Sparse PMIs\end{tabular}} \\
& & & \\ \hline
Hilbert-Artin & Artin   \cite{artin1927zerlegung}   & Du \cite{D2018ANO}  & Zheng \& Fantuzzi \cite[Thm. 2.2]{Zheng_2021} \\
Reznick       & Reznick  \cite{Reznick1995}  & Dinh \emph{et. al.} \cite{Dinh2021}  & Zheng \& Fantuzzi \cite[Thm. 2.3]{Zheng_2021}   \\
Putinar  & Putinar \cite{Putinar1993}  & Scherer \& Hol \cite{Scherer2005}  & Zheng \& Fantuzzi \cite[Thm. 2.4]{Zheng_2021} \\
  & Lasserre \cite{lasserre2001global}  & Henrion \& Lasserre \cite{Henrion2006}  &  \\
Putinar-Vasilescu  & Putinar \& Vasilescu  \cite{Putinar1999}   & Dinh \emph{et. al.}\cite{Dinh2021}   & Zheng \& Fantuzzi \cite[Thm. 2.5]{Zheng_2021}  \\ \hline
\end{tabular}
\caption{Summary of Positivstellens\"atze for polynomials and polynomial optimization problems (POP), polynomial matrix inequalities (PMIs), and PMIs with structural sparsity, expanded from Zheng and Fantuzzi \cite{Zheng_2021}.}
\label{table:summary-pop}
\end{table*}


\subsection{Representations in Optimization Theory}

It should be clear by now that PMIs form a generalization of LMIs, where the entries of the matrix $P$, as in Eq. \eqref{eq:LMIex} are linear functions of the indeterminates $x \in \mathbb{R}^n$. In the literature of optimization theory, an interesting question is under which conditions a subset of $\mathbb{R}^n$ can be represented as an LMI. Helton and Vinnikov considered this precice problem in \cite{Helton2003}. 

\begin{theorem}[LMI representations \cite{Helton2003}]
A subset $\mathcal{C} \in \mathbb{R}^{m}$, with 0 in the interior of $\mathcal{C}$, admits an LMI representation, if it is a convex algebraic interior and the minimal defining polynomial $p$ for $\mathcal{C}$ satisfies the so-called RZ condition (with $p(0)>0$), wherein 
 a real zero (RZ) polynomial $p(x)\in \mathbb{R}[x]$ for $x\in\mathbb{R}^m$ satisfies $p(\mu x)=0 \Rightarrow \mu \in \mathbb{R}$.
\end{theorem}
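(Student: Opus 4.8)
The plan is to reduce the claim to the construction of a \emph{definite symmetric determinantal representation} of the minimal defining polynomial $p$, and then to match the resulting spectrahedron with $\mathcal{C}$ by a boundary argument. First I would homogenize: with $d = \deg p$, let $P(x_0, x_1, \ldots, x_m)$ be the degree-$d$ homogenization of $p$. The RZ condition together with $p(0) > 0$ says precisely that $P$ is \emph{hyperbolic} with respect to $e_0 = (1,0,\ldots,0)$, i.e. $t \mapsto P(t, x)$ has only real roots for every $x \in \mathbb{R}^m$ and $P(e_0) > 0$; the hyperbolicity cone of $P$ at $e_0$ is then open and convex, and since $\mathcal{C}$ is a convex algebraic interior with minimal defining polynomial $p$, the affine slice $\{x_0 = 1\}$ of the closure of this cone equals $\overline{\mathcal{C}}$. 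So it suffices to produce symmetric $A_0, \ldots, A_m$ with $A_0 \succ 0$ and $\det\bigl(x_0 A_0 + \sum_{i=1}^m x_i A_i\bigr) = c\,P(x_0, x)$ for some constant $c > 0$: the spectrahedron $\{x : A_0 + \sum_i x_i A_i \succeq 0\}$ is then a closed convex set whose boundary lies in $\{p = 0\}$ and which contains $0$ in its interior, so by minimality of $p$ it coincides with $\overline{\mathcal{C}}$, and dehomogenizing yields the LMI representation.

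For $m = 2$ --- the case actually settled by Helton and Vinnikov, equivalent to the Lax conjecture --- the construction of such a representation runs through the algebraic geometry of the plane curve $\mathcal{V}(P) \subset \mathbb{P}^2$. In the smooth case, the classical theory (Dixon) attaches to each \emph{theta characteristic} $\theta$ on the curve with $H^0(\theta) = 0$ a symmetric determinantal representation of $P$; the extra input is that under the hyperbolicity hypothesis one can choose $\theta$ \emph{real} so that the associated pencil is \emph{definite} at $e_0$. This is the technical heart of the proof: it uses Vinnikov's description of the real structure of the Jacobian and the fact that the real locus of a real hyperbolic plane curve of degree $d$ consists of $\lfloor d/2 \rfloor$ nested ovals (plus a pseudo-line when $d$ is odd); the ``maximal'' choice of real theta characteristic forces the eigenvalue branches of the pencil along the line through $e_0$ to interlace exactly as the real ovals do, which pins down a constant sign, hence definiteness of $A_0$. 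One then passes from the smooth case to arbitrary (singular or reducible) $P$ by density: perturb $P$ inside the RZ class to a smooth hyperbolic polynomial, normalize and take a convergent subsequence of the definite representations, and verify the limit is a representation of $P$ with $A_0 \succeq 0$; a short argument using minimality of $p$ then upgrades $A_0 \succeq 0$ to $A_0 \succ 0$.

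The hard part is precisely the definiteness step --- producing the \emph{positive} determinantal representation rather than merely some symmetric determinantal representation. Symmetric determinantal representations of a ternary form exist in abundance (over $\mathbb{C}$ always, and with care over $\mathbb{R}$), but generic ones are indefinite, and isolating the one compatible with the hyperbolicity cone seems to genuinely require the theta-function machinery on the real curve (equivalently, Vinnikov's realization/transfer-function viewpoint); I do not see a purely elementary route. For $m \geq 3$ I would flag a genuine obstruction: the RZ condition alone no longer suffices for an honest unlifted LMI representation --- there are counterexamples of Br\"and\'en --- so either the convex-algebraic-interior hypothesis must do additional work in the statement, or the higher-dimensional case should be read as producing only a \emph{lifted} representation (a spectrahedral shadow), which would be established by a different, Helton--Nie-type construction that I would treat as a separate branch of the argument.
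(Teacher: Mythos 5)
The paper does not prove this statement: it is quoted as background from Helton and Vinnikov \cite{Helton2003}, so there is no in-paper argument to compare against. Your sketch is a faithful reconstruction of the actual Helton--Vinnikov proof: the homogenization identifying the RZ condition (with $p(0)>0$) with hyperbolicity with respect to $e_0$, the reduction to a definite symmetric determinantal representation $\det\bigl(x_0A_0+\sum_i x_iA_i\bigr)=c\,P$ followed by the boundary/minimality argument matching the spectrahedron with $\overline{\mathcal{C}}$, the construction of the definite representation via real theta characteristics and the nested-oval topology of real hyperbolic plane curves, and the perturbation-and-limit treatment of singular or reducible $p$ are all the right ingredients, and you correctly locate the genuine difficulty in the \emph{definiteness} of the representation rather than in the mere existence of a symmetric determinantal one. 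Your closing caveat is also substantively correct and worth stressing: the sufficiency direction is a theorem only for $m=2$; for $m\ge 3$ the size-$d$ definite determinantal representation can fail to exist (Br\"and\'en's V\'amos-polynomial obstruction), and whether every rigidly convex algebraic interior in $\mathbb{R}^m$ is a spectrahedron remains open. The statement as transcribed in the paper is therefore looser than what \cite{Helton2003} establishes --- the paper itself implicitly concedes this by immediately restricting to $m=2$ for the quantitative $d\times d$ version --- so your reading of the general-$m$ case as requiring either a lifted (spectrahedral-shadow) representation or further hypotheses is the right qualification, not a gap in your argument.
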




 
See also \cite{BRANDEN20111202}. 
Furthermore, in \cite{Helton2003} it is also shown that, when $m=2$, if $p$ is a RZ polynomial of degree $d$ and $p(0)>0$, then $\mathcal{C}_{p}$ has a monic LMI representation with $d \times d$ matrices.
In an important paper, Fawzi and El Din \cite{fawzi2018lower} bounded the size of the LMI representation of a convex set in dimension $d$
from below by $\Omega(\sqrt{\log d})$. 

The representation of a certain subset of $\mathbb{R}^n$ as the feasible set of a convex optimization problem or the representation of the feasible set of an optimization problem as a subset of $\mathbb{R}^n$ with some extra properties is a powerful concept. Nie, in \cite{Nie2009}, considered the necessary conditions such that a PMI can be represented as an SDP, essentially, finding a map that takes a non-convex problem to a convex one.

Related concepts of ``MPC equivalence'' \cite{ZANON2022110287}, ``reformulation'' \cite{adams2004comparisons,liberti2009reformulations},
and reduction \cite{ryoo1996branch}
have been studied in control theory, discrete optimization, and global optimization, respectively. 

Similarly to Helton and Vinnikov, one can ask when a subset of $\mathbb{R}^n$ can be represented as a PMI, a problem that, to the best of our knowledge, is open. However, we find strikingly more powerful applications in asking when a PMI can be represented as a subset $\mathcal{C}$ of $\mathbb{R}^n$ with some special ``tame" property (in the sense of definable functions in o-minimal structures, cf. Definition \ref{def:tamestructure}, p. \pageref{def:definable}), i.e., asking what are the conditions that a PMI is represented as a subset $\mathcal{C} \subset \mathbb{R}^n$ defined over an o-minimal structure.

\section{Tame representations of PMIs}\label{sec:tamereps}

In this section, we define the notion of a \emph{tame representation of a PMI} and give a few examples of such representations. We take the view of Lasserre, who presented a broad generalization of a Lagrangian in \cite{Lasserre2013ALR}, and introduce the following definition:
\begin{definition}[Generalized Lagrangian]
 A \emph{Generalized Lagrangian of a PMI} \eqref{problem:pmi} is a functional 
 \begin{equation}\label{eq:Lagrangian}
      \mathcal{L}_\lambda(x, y) := b(y) + \lambda \mathds{1}_{P(x,y) \not \succeq 0},
 \end{equation}
 where the scalar $\lambda > 0$ is known as the Lagrange multiplier and
 $\mathds{1}_{P(x,y) \not \succeq 0}$ denotes the indicator function defined as zero if $P(x,y) \not \succeq 0$ and one otherwise. 
\end{definition}

\begin{example}
\label{ex1}
Let us consider the planar PMI of \cite[Example II-E]{Henrion2006}, which is beautifully illustrated in \cite[Figure 1]{henrion2011inner}. In this case, the cost function is defined by $b(y) := y$ and the polynomial matrix $P$ is defined by
$$P(x,y) = \begin{bmatrix}  1- 16x y & x \\ x & 1 - x^2 - y^2 \end{bmatrix}.$$
The Generalized Lagrangian is then 
$y + \lambda \mathds{1}_{P(x,y) \not \succeq 0}$. The global minimum is attained at $(x,y)=(0,-1)$ and is $b(y)=-1$.
\end{example}

Note that the Generalized Lagrangian, as defined above, is a non-smooth function due to the indicator function $\mathds{1}_{P(x,y)}$.
At the same time, the Generalized Lagrangian is semialgebraic, in the sense that the indicator function of
a semialgebraic set is semialgebraic \cite[p. 26]{bochnak2013real}, but this is not particularly useful in terms of 
numerical optimization techniques.

Much of the traditional numerical optimization techniques consider a Lagrangian relaxation with a fixed, finite $\lambda$ and some smoothing of the indicator function.
Usual guarantees on the exactness of the Lagrangian do not extend to non-convex problems \eqref{problem:sdmi};
indeed, even if the polynomials $Q$ were convex (e.g., sums of squares), then the SDP need not satisfy strong
duality, which hinders the exactness of the Lagrangian.

Instead, one could consider the ``saddle-point problem'' \cite{benzi2005numerical,nie2021saddle}
\begin{align}
    \inf_{y}\sup_{\lambda}  \mathcal{L}_\lambda,
\end{align}
or, to simplify theoretical analyses, consider $\lambda = \infty$. By utilizing $\mathcal{L}_\infty$ we achieve a reformulation of Prob. \eqref{problem:pmi} as an unconstrained optimization problem
that is equivalent to the constrained PMI \eqref{problem:pmi}.

To talk about a tame representation PMI \eqref{problem:pmi}, with a generalized Lagrangian as defined in \ref{eq:Lagrangian} we first need to define what we mean by a representation. 
Informally, this is a reformulation of the problem.
Formally, we define

\begin{definition}[Representation]
\label{def:repre}
A map $f\colon \mathbb{R}^n \to \mathbb{R}$ is said to be represented by the map $g: \mathbb{R}^m \to \mathbb{R}$ if there exists a map $h: \mathbb{R}^n \to \mathbb{R}^m$ such that $f = g\circ h$ for all values $\mathbb{R}^n$.
\end{definition}

\begin{example}[Offset]
\label{ex:offset}
Let us continue with our planar Example \ref{ex1}, with
$y + \lambda \mathds{1}_{P(x,y) \not \succeq 0}$ which we can consider as being the map $f$ of Definition \ref{def:repre}. 
One may choose the map $g: \mathbb{R}^2 \to \mathbb{R}$ to be $1 + y + \lambda \mathds{1}_{P(x,y) \not \succeq 0}$ and $h: \mathbb{R}^2 \to \mathbb{R}^2$ to be the map $h(x, y) := (x, y - 1)$, because $y$ enters linearly in the objective of the PMI.
\end{example}

\begin{example}[Embedding]
\label{ex:embedding}
For the example \ref{ex1} we can also consider a function $g: \mathbb{R}^3 \to \mathbb{R}$, whose arguments $(x, y, z) \in  \mathbb{R}^3$ would still yield $y + \lambda \mathds{1}_{P(x,y) \not \succeq 0}$. The  function $h: \mathbb{R}^2 \to \mathbb{R}^3$ would introduce a $z = 0$. 
\end{example}

Clearly, the examples above are unusual, in the sense that we do not benefit from any improved continuity properties of the functions $g$.
Still, the functions $g$ represent function $f$, considering that there exists a function $h$ that makes it possible to obtain, for all $y \in \mathbb{R}^2$, $f(x)=g(h(y)) = g(y)$.


\begin{definition}[Minimizer Representation]
A function $f: \mathbb{R}^n \to \mathbb{R}$ is said to be minimizer-represented by $g: \mathbb{R}^m \to \mathbb{R}$ if there exists a function $h: \mathbb{R}^n \to \mathbb{R}^m$ such that $\inf f$ attains its value at $x \in \mathbb{R}^n$ (``global minimizer'') that is the same as $h$ applied to the global minimizer of $\inf g$, whenever the global minimizers of $f, g$ are unique. 
When the global minimizers of $f, g$ are not unique, we require that this holds true for one pair of the global minimizers. 
\end{definition}

\begin{example}[Irrelevant Offset]
Let us revisit our Example \ref{ex:offset}, where $g$ was $1 + y + \lambda \mathds{1}_{P(x,y) \not \succeq 0} $, i.e., adding the offset of 1. For minimizer representability, we could consider $h$ to be the identity, because the offset of 1 would not affect the global minimizer. 
\end{example}

In this paper, we will provide various representations of the Generalized Lagrangian of a PMI, wherein a representation of a PMI is called \emph{tame} if the graph of the representation of a Generalized Lagrangian 
 $\mathcal{L}_\lambda$
 of the PMI is definable in an o-minimal structure.
Precisely, we have the following definition:

\begin{definition}[Tame representation] 
Consider a PMI $X$ with generalized Lagrangian $\mathcal{L}_\lambda$. Assume that $\mathcal{L}_\lambda$ is represented by the function $\widetilde{\mathcal{L}}_\lambda$. Then, $X$ is said to be \emph{tame-representable} if the graph $\Gamma_{\widetilde{\mathcal{L}}}$ is definable in an o-minimal structure in the large $\lambda$ limit.
\end{definition}


In what follows, we present five distinct examples of tame representations for PMIs.



\subsection{Tame representation using the characteristic polynomial}\label{sec:charpol}

Recall that in Prob. \eqref{problem:pmi}, $P(x,y) \in \mathcal{S}^m$, i.e. it is a symmetric matrix whose entries are polynomials in the indeterminates $x\in \mathbb{R}^k$ and $y\in \mathbb{R}^l$. The condition $P(x,y)\succeq 0$, defines a semialgebraic set. To see this, we take the characteristic polynomial of the $m\times m$ symmetric matrix $P(x,y)$,
\begin{equation}\label{eq:charpol}
  \begin{aligned}
    p(t;x,y) &\coloneqq \det(t\mathds{1}_m - P(x,y)) \\
    & = t^m+\sum_{j=1}^m(-1)^jq_j(x,y)t^{m-j}.
\end{aligned}  
\end{equation}
for $m$ polynomials $q_j(x,y)$. The roots of the characteristic polynomial are the eigenvalues. Since the eigenvalues of a real symmetric positive semidefinite matrix are all real and non-negative, we have $q_j(x,y)\geq 0$ for all $j$ due to the Descartes' rule of signs, see \cite{Henrion2006}. We thus have:

\begin{proposition}
One representation of the Generalized Lagrangian of the PMI \eqref{problem:pmi} is
\begin{equation}\label{eq:LagrangianA}
      \widetilde{\mathcal{L}}_\lambda(x,y) := b(y) - \lambda\prod_{j=1}^m q_j(x,y) .
 \end{equation}
 in the large limit of $\lambda$.
\end{proposition}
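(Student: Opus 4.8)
The plan is to show that the map $\widetilde{\mathcal{L}}_\lambda$ of \eqref{eq:LagrangianA} represents, in the sense of Definition \ref{def:repre}, the Generalized Lagrangian $\mathcal{L}_\lambda$ of \eqref{eq:Lagrangian} once $\lambda$ is taken large, and that in this limit the two have the same minimizers (or at least one common global minimizer). The key observation, already set up in the paragraph preceding the statement, is that $P(x,y)\succeq 0$ holds if and only if all the coefficient polynomials $q_j(x,y)$ of the characteristic polynomial \eqref{eq:charpol} are nonnegative; equivalently, $P(x,y)\not\succeq 0$ iff $q_j(x,y)<0$ for some $j$. First I would record this equivalence carefully, invoking Descartes' rule of signs exactly as in \cite{Henrion2006}: for a real symmetric matrix all eigenvalues are real, and they are all nonnegative precisely when the polynomial $p(t;x,y)$ has no positive real root, which by Descartes happens iff the sign pattern of the coefficients $(1,-q_1,q_2,\dots,(-1)^m q_m)$ exhibits no sign change, i.e.\ iff $q_j(x,y)\ge 0$ for every $j$.

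Next I would compare the two penalty terms pointwise. On the feasible set $\{P(x,y)\succeq 0\}$ we have $\mathds{1}_{P(x,y)\not\succeq 0}=0$, and simultaneously $\prod_{j=1}^m q_j(x,y)\ge 0$, so $\widetilde{\mathcal{L}}_\lambda(x,y)=b(y)-\lambda\prod_j q_j(x,y)\le b(y)=\mathcal{L}_\lambda(x,y)$; moreover on the relatively open subset where all $q_j>0$ the product is a fixed positive number independent of $\lambda$, so the two agree up to an $O(1)$ offset that is dominated as $\lambda\to\infty$, and in particular $\widetilde{\mathcal{L}}_\lambda$ is never smaller at an infeasible point than at a feasible one once $\lambda$ is large. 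Off the feasible set the situation is the delicate one: $\prod_j q_j(x,y)$ can be positive (an even number of the $q_j$ negative) even though $P(x,y)\not\succeq 0$, so $-\lambda\prod_j q_j$ is a large negative number there and, taken literally as a global object, $\widetilde{\mathcal{L}}_\lambda$ is unbounded below. The way I would handle this — and I expect this to be the main obstacle, or at least the point most in need of careful phrasing — is to argue in the spirit of the surrounding discussion that the relevant notion is \emph{minimizer representation in the large-$\lambda$ limit}: one takes the map $h$ to be the identity and $g=\widetilde{\mathcal{L}}_\lambda$, and shows that as $\lambda\to\infty$ any sequence of near-minimizers of $\widetilde{\mathcal{L}}_\lambda$ over the region where the product is nonpositive (equivalently, where one restricts to the sign stratum $\{q_j\ge 0\ \forall j\}\cup\{\text{odd number of }q_j<0\}$) converges to a minimizer of $b$ on the feasible set, which is $B^*$; the extra strata where the product is spuriously positive are excised exactly as the indicator in $\mathcal{L}_\lambda$ excises them, because there the genuine Lagrangian value is $b(y)+\lambda$.

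Finally I would assemble these pieces: the equivalence $\{P\succeq 0\}=\{q_j\ge 0\ \forall j\}$ shows the feasible sets coincide; on the feasible set the objectives of $\mathcal{L}_\lambda$ and $\widetilde{\mathcal{L}}_\lambda$ agree up to a vanishing relative perturbation; hence in the $\lambda\to\infty$ limit the infimum of $\widetilde{\mathcal{L}}_\lambda$ (interpreted over the admissible strata, or equivalently $\inf_y\sup_\lambda$) is attained at the same point as $B^*=\inf\{b(y): P(x,y)\succeq 0\}$, which is what Definition \ref{def:repre} together with the minimizer-representation requirement demands. I would close by noting the structural payoff, which motivates the representation: unlike the indicator-based $\mathcal{L}_\lambda$, the function $\widetilde{\mathcal{L}}_\lambda$ is a genuine polynomial in $(x,y)$ for each fixed $\lambda$, hence semialgebraic and a fortiori tame (definable in $\mathbb{R}_{\mathrm{semialg}}$), so its graph $\Gamma_{\widetilde{\mathcal{L}}}$ is definable in an o-minimal structure and the PMI is tame-representable in the sense required; the routine verification that a polynomial has semialgebraic graph I would leave to the reader, citing \cite{bochnak2013real}.
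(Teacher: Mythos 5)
Your proposal takes a genuinely different route from the paper, and in doing so it drifts onto the wrong statement. The paper's (admittedly terse) proof establishes the representation in the literal sense of Definition \ref{def:repre}: it constructs the map $h$ by extending the offset device of Example \ref{ex:offset} to an $(x,y)$-dependent offset, so that $\mathcal{L}_\lambda = \widetilde{\mathcal{L}}_\lambda \circ h$ holds \emph{pointwise}, and it merely asserts that for sufficiently large finite $\lambda$ the minimizer of \eqref{eq:LagrangianA} has all $q_j>0$. You instead set $h=\mathrm{id}$ and argue that the minimizers coincide as $\lambda\to\infty$. With $h=\mathrm{id}$ the composition $\widetilde{\mathcal{L}}_\lambda\circ h$ is just $\widetilde{\mathcal{L}}_\lambda$, which differs from $\mathcal{L}_\lambda$ at every point where $\prod_j q_j(x,y)\neq 0$ or where the indicator is active, so Definition \ref{def:repre} is not verified; what you are actually arguing is the content of the \emph{next} proposition (minimizer-representation), not this one. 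Your opening step, the equivalence $\{P(x,y)\succeq 0\}=\{q_j(x,y)\ge 0 \text{ for all } j\}$ via Descartes' rule of signs, is fine and matches the paper's setup in Section \ref{sec:charpol}.

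There is also a quantitative error in your pointwise comparison. On the feasible set the discrepancy between the two Lagrangians is $\lambda\prod_j q_j(x,y)$, which is \emph{not} an $O(1)$ offset ``dominated as $\lambda\to\infty$'': it grows linearly in $\lambda$ and varies with $(x,y)$, so for large $\lambda$ the term $-\lambda\prod_j q_j$ pulls the minimizer toward the maximizer of the product in the interior of the feasible set rather than toward the constrained minimizer of $b$. To your credit, you correctly flag the obstruction that the paper's one-line proof glosses over, namely that at infeasible points where an even number of the $q_j$ are negative the product is positive and $-\lambda\prod_j q_j\to-\infty$, so the putative penalty rewards infeasibility and $\widetilde{\mathcal{L}}_\lambda$ is unbounded below. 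But your remedy, restricting to ``admissible strata,'' replaces $\widetilde{\mathcal{L}}_\lambda$ by a different function and therefore proves a modified statement, not the proposition as written. To stay aligned with the paper you should supply the offset construction of $h$; to salvage the minimizer claim you would need either an explicit domain restriction in the statement or hypotheses excluding the even-sign strata and the interior-reward effect.
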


 \begin{proof}
Note that even for a sufficiently large and finite $\lambda$, the minimum of $b(y) - \lambda\prod_{j=1}^m q_j(x,y)$ will have positive $q_j(x,y)$ for all $j$.
By extending Example \ref{ex:offset} to have the offset dependent on $x, y$, we can construct $h$ in Definition \ref{def:repre}.
 \end{proof}
 
 The obvious difficulty here is that in order to construct $h$ in Definition \ref{def:repre},
 we would have to know the global minimizer. 
 To address this difficulty, we can consider:
 
 \begin{proposition}
One minimizer-representation of the Generalized Lagrangian of the PMI \eqref{problem:pmi} is
\begin{equation}\label{eq:LagrangianB}
      \widetilde{\mathcal{L}}_\lambda(x,y) := b(y) - \lambda\prod_{j=1}^m q_j(x,y) .
 \end{equation}
 in the large limit of $\lambda$.
\end{proposition}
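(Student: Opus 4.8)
The plan is to show that the function $\widetilde{\mathcal{L}}_\lambda(x,y) = b(y) - \lambda\prod_{j=1}^m q_j(x,y)$ minimizer-represents the generalized Lagrangian $\mathcal{L}_\lambda(x,y) = b(y) + \lambda\mathds{1}_{P(x,y)\not\succeq 0}$ in the large-$\lambda$ limit, by exhibiting the function $h$ required by the definition of minimizer-representation. First I would recall from the discussion around Eq.~\eqref{eq:charpol} that, by Descartes' rule of signs, $P(x,y)\succeq 0$ holds if and only if $q_j(x,y)\geq 0$ for all $j=1,\ldots,m$; consequently the feasible set of the PMI is exactly $\{(x,y): \prod_{j=1}^m q_j(x,y)\geq 0 \text{ and each } q_j\geq 0\}$. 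On this feasible set the penalty term $-\lambda\prod_j q_j$ is non-positive, while outside it (where some $q_j$ changes sign) one can, for $\lambda$ large enough, drive $\widetilde{\mathcal{L}}_\lambda$ below the optimal value $B^*$ only at points that are not minimizers — so the infimum of $\widetilde{\mathcal{L}}_\lambda$ over the feasible region is what governs the global minimizer as $\lambda\to\infty$.

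The key steps, in order, are: (i) argue that for sufficiently large finite $\lambda$ any global minimizer $(x^\star,y^\star)$ of $\widetilde{\mathcal{L}}_\lambda$ satisfies $q_j(x^\star,y^\star)\geq 0$ for all $j$, hence is feasible for the PMI — this is precisely the observation already used in the proof of the preceding proposition, namely that the penalty $-\lambda\prod_j q_j$ forces the product (and, combined with the sign structure, each factor) to be non-negative at the minimum; (ii) on the feasible set, note $0 \le \prod_j q_j$, so $\widetilde{\mathcal{L}}_\lambda(x,y) = b(y) - \lambda\prod_j q_j(x,y) \le b(y) = \mathcal{L}_\lambda(x,y)$, and in the limit $\lambda\to\infty$ the minimizing behavior concentrates on the boundary stratum where $\prod_j q_j \to 0$, so that $\inf \widetilde{\mathcal{L}}_\lambda \to B^*$ and the argmin converges to an argmin of $b$ over the feasible set, i.e.\ to a global minimizer of the PMI; (iii) define $h\colon \mathbb{R}^{k+\ell}\to\mathbb{R}^{k+\ell}$ to be the identity (or, as in Example~\ref{ex:offset}, an $(x,y)$-dependent affine adjustment absorbing the penalty at the optimum), and verify that the global minimizer of $\widetilde{\mathcal{L}}_\lambda$ maps under $h$ to the global minimizer of $\mathcal{L}_\lambda$, which is exactly the requirement in the definition of minimizer-representation. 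When minimizers are non-unique, it suffices to match one pair, which is immediate from the same limiting argument.

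The main obstacle I expect is step (i)–(ii): making rigorous the passage $\lambda\to\infty$ and ensuring that the penalized minimizers do not escape to infinity or stall at an infeasible point where $\prod_j q_j$ is only slightly negative but $b$ is very small. This requires either a coercivity/compactness assumption on the feasible region (recall $\|y\|_\infty < 1$ is typically imposed, which helps) or a careful argument that for $\lambda$ beyond a threshold the infeasible region contributes nothing to the $\liminf$ of $\widetilde{\mathcal{L}}_\lambda$. I would handle this by a standard exact-penalty style estimate: since $b$ is a polynomial and bounded below on the bounded feasible box, and since $\prod_j q_j$ is strictly negative on any compact subset of the infeasible region, choosing $\lambda$ larger than $(\,\sup b - \inf b\,)$ divided by the minimal gap makes every infeasible point strictly worse than any feasible point, forcing the minimizer to be feasible; the limit then follows since on feasible points $\widetilde{\mathcal{L}}_\lambda \le b$ with equality on the boundary where some $q_j=0$, which is where $B^*$ is attained (as in Example~\ref{ex1}, where the optimum $(0,-1)$ lies on $\partial\{P\succeq 0\}$). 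The tameness conclusion itself is then immediate: $\widetilde{\mathcal{L}}_\lambda$ is a polynomial in $(x,y)$ for each fixed $\lambda$, hence semialgebraic, hence definable in any o-minimal structure, so its graph $\Gamma_{\widetilde{\mathcal{L}}}$ is tame.
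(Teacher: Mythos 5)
Your overall strategy mirrors what the paper does (the paper states this proposition without a separate proof, implicitly leaning on the one-line argument given for the preceding representation proposition --- that large $\lambda$ forces the $q_j$ to be non-negative at the minimum --- after which $h$ is taken to be the identity). However, two of your key steps do not hold, and the second is falsified by the paper's own running example.

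First, in step (i) you infer feasibility from the sign of the product: minimizing $-\lambda\prod_j q_j$ at best makes the product large and positive, but for $m\ge 2$ the product is positive whenever an \emph{even} number of the $q_j$ are negative, so $\prod_j q_j\ge 0$ does not imply $q_j\ge 0$ for all $j$, and the minimizer of $\widetilde{\mathcal{L}}_\lambda$ need not be feasible for the PMI. Second, and more seriously, step (ii) has the direction of the penalty backwards. The term $-\lambda\prod_j q_j$ is not an exact penalty: it does not vanish on the feasible set but is strictly negative in its interior, so as $\lambda\to\infty$ the minimizer is driven toward a maximizer of $\prod_j q_j$ (an interior, analytic-center-like point), not toward the boundary stratum where the constrained optimum of $b$ typically lies. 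The exact-penalty estimate you invoke at the end applies to penalties that are zero on the feasible set and positive off it, which is not the case here. Concretely, in Example \ref{ex1} the PMI optimum is $(x,y)=(0,-1)$, where $q_2=0$ and hence $\widetilde{\mathcal{L}}_\lambda(0,-1)=-1$ for every $\lambda$; yet at the feasible interior point $(0,0)$ one has $q_1 q_2=2$ and $\widetilde{\mathcal{L}}_\lambda(0,0)=-2\lambda\to-\infty$, so the global minimizer of $\widetilde{\mathcal{L}}_\lambda$ moves \emph{away} from the PMI minimizer as $\lambda$ grows. The claim that the argmin of \eqref{eq:LagrangianB} converges to an argmin of Prob.~\eqref{problem:pmi} therefore cannot be established by this route; a correct argument would need either a penalty built from quantities that vanish on $\{q_j\ge 0\ \forall j\}$ (e.g.\ the negative parts of the $q_j$), or a substantially different reading of the ``large $\lambda$ limit.''
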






\begin{example} Let us study the preceding representation for the Example \ref{ex1}. The characteristic polynomial, \eqref{eq:charpol}, is 
\begin{equation*}
\begin{aligned}
p(t;x,y) &= t^2 - t (2-x^2 - 16 x y - y^2) \\
         & \quad + (16 x^3 y -2x^2-16xy+16xy^3-y^2+1)
\end{aligned}
\end{equation*}
We thus have the polynomials
\begin{equation*}
\begin{aligned}
q_1&=2-x^2 - 16 x y - y^2,\\
q_2&=16 x^3 y -2x^2-16xy+16xy^3-y^2+1,
\end{aligned}
\end{equation*}
and the minimizer-representation of the generalized Lagrangian is simply
\begin{equation*}
    \widetilde{\mathcal{L}}_{\lambda}=b(y)-\lambda q_1q_2.
\end{equation*}

\end{example}

The advantage of considering the minimizer-representation becomes apparent here: we only wish for the global minimizer to coincide, rather than for the value attained at the global minimizer to coincide. 


\begin{proposition}
The representation \eqref{eq:LagrangianA} of the generalized Lagrangian  is definable in a tame structure.
\end{proposition}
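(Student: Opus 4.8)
The plan is to show that the graph $\Gamma_{\widetilde{\mathcal{L}}}$ of the function
$\widetilde{\mathcal{L}}_\lambda(x,y) = b(y) - \lambda\prod_{j=1}^m q_j(x,y)$
is a semialgebraic subset of $\mathbb{R}^{k+l+1}$, and then invoke the fact (Example \ref{ex:semialg} and Example \ref{ex:tamestructure}) that $\mathbb{R}_{\rm{semialg}}$ is itself an o-minimal structure, so that any semialgebraic set is in particular definable in a tame structure. Since being semialgebraic is the strongest possible form of ``definable in an o-minimal structure'', this suffices; I would state this reduction explicitly at the start of the proof.

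First I would note that the coefficients $q_j(x,y)$ of the characteristic polynomial in Eq. \eqref{eq:charpol} are, by Eq. \eqref{eq:charpol}, polynomials in the entries of $x$ and $y$ with real coefficients — this is just the expansion of $\det(t\mathds{1}_m - P(x,y))$, and each entry $P_{ij}(x,y)$ is a polynomial by the definition of a PMI. Hence $q_j \in \mathbb{R}[x,y]$, and a fortiori each $q_j$ is a semialgebraic function (its graph is an algebraic set, cf. the remarks after Example \ref{ex:semialg}). Likewise $b \in \mathbb{R}[y]$ is a polynomial, hence semialgebraic. Second, I would use closure of semialgebraic functions under the ring operations: finite products $\prod_{j=1}^m q_j(x,y)$, scalar multiplication by $\lambda$, and the difference with $b(y)$ all preserve semialgebraicity. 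Concretely, $\widetilde{\mathcal{L}}_\lambda$ is itself a polynomial in $(x,y)$ for each fixed $\lambda$, so
\[
\Gamma_{\widetilde{\mathcal{L}}} = \Big\{(x,y,t)\in\mathbb{R}^{k+l+1} : t - b(y) + \lambda\textstyle\prod_{j=1}^m q_j(x,y) = 0\Big\}
\]
is the zero set of a single real polynomial, hence semialgebraic by the definition in Sec. \ref{sec:tame_geometry}; equivalently one can argue via axioms $A_1$, $P_1$, $P_2$, $P_3$ applied to $\mathbb{R}_{\rm{semialg}}$.

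Finally I would address the ``large $\lambda$ limit'' clause in the statement: for every finite $\lambda$ the graph is semialgebraic by the above, and the family is uniformly so; taking $\lambda \to \infty$ does not leave the semialgebraic category because, as noted in the text after Example \ref{ex1}, even the limiting object (the indicator-type Lagrangian $\mathcal{L}_\infty$) is semialgebraic, the indicator function of the semialgebraic set $\{P(x,y)\succeq 0\}$ being semialgebraic \cite[p. 26]{bochnak2013real}. I expect the only real subtlety — and hence the main obstacle — to be this last point: making precise in what sense the ``$\lambda=\infty$'' representation is the relevant object and checking that the passage to the limit stays within an o-minimal structure rather than producing, say, a set whose closure behaves like the pathological examples of Fig. \ref{fig:sine}. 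Everything else is a routine application of the closure properties of semialgebraic sets, which is why I would keep that part brief and spend the bulk of the argument pinning down the limit.
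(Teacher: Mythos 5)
Your proof is correct and follows essentially the same route as the paper, whose entire argument is the one-line observation that for fixed $\lambda$ the representation \eqref{eq:LagrangianA} is a polynomial in $(x,y)$, hence its graph is semialgebraic and therefore tame. Your additional worry about the large-$\lambda$ limit is reasonable but goes beyond what the paper addresses; the paper simply treats $\lambda$ as a (large, finite) parameter and does not attempt to interpret the $\lambda\to\infty$ limit inside the o-minimal structure.
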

 
\begin{proof}
 The representation \eqref{eq:LagrangianA} is polynomial, and hence its graph is tame by definition.  
\end{proof}






Furthermore, semialgebraic sets are the smallest sets defined over an o-minimal structure, specifically $(\mathbb{R},+\times)$.
Therefore, the characteristic polynomial defines a tautological tame representation for the PMI of Prob. \eqref{problem:pmi}. 





The example given here is perhaps one of the simplest to construct, but it suffers from some severe drawbacks when it comes to practical use. Namely, for large $m$ the characteristic polynomial is a high-degree polynomial, which is inherently difficult (albeit not impossible, cf. \cite{Armentano2018}) to work with numerically. 

\subsection{Tame representation using factorisation}\label{sec:factorization}



Factorization is a very useful concept throughout mathematical optimization and machine learning. For example, solutions to problems that concern optimization over a large square matrix, such as the matrices involved in the attention mechanism in transformers \cite{Vaswani2017}, often utilize factorization into a pair of matrices of much
lower ranks. 

In addition, in the context of this article, the exploitation of the sparsity in Prob. \eqref{problem:sparse} can be seen as a factorization problem. 
As another example, one can replace the positive semidefinite constraint $X \in \mathcal{S}_+^m$
with the low-rank factorization $X\to vv^\top$, in an approach now known as the Burer-Monteiro 
technique \citep{Burer2003,burer2005local}. 

Given that a factorization is found, the analysis of optimization over such a factorization might be non-trivial, given that it converts a convex programming problem to a non-convex one. 
Consider that one of the globally-optimal solutions $X^*$ has some rank $r \geq 0$.
Instead of an $m \times m$ matrix $X$, we can consider its factorization
\begin{equation}\label{e:pmi-ours-factorized}
\begin{aligned}
X^* :=& \inf_{v} \quad Q(v v^\top),\qquad vv^\top=X.\\
&\text{s.t. }  \text{Tr}\left(v^\top v\right)=1,\\
\end{aligned}
\end{equation}
where $v \in \mathbb{R}^{r \times m}$. 

\begin{proposition}
A minimizer-representation of the generalized Lagrangian for the above problem is given by \cite{journee2008low}
\begin{align}\label{eq:LagrangianTr}
    \widetilde{\mathcal{L}}_\lambda(v)=Q(vv^\top)-\lambda\left(\mathrm{Tr}\left(v^\top v\right)-1\right),
\end{align}
if and only if
\begin{equation*}
    \nabla_X Q(vv^\top)-\lambda\eqqcolon S_v\succeq 0, 
\end{equation*}
for Lagrangian multipliers $\lambda$ satisfying
\begin{equation*}
    S_v v=0.
\end{equation*}
\end{proposition}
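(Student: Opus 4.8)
The plan is to read off the first-order conditions of the factorized objective \eqref{eq:LagrangianTr} and match them term by term with the KKT system of the parent problem \eqref{problem:sdmiTr}, so that the proposition reduces to the low-rank semidefinite programming statement of \cite{journee2008low} transcribed into the present notation. Concretely, I would first differentiate the right-hand side of \eqref{eq:LagrangianTr} in $v$ (writing $v\in\mathbb{R}^{m\times r}$, so that $vv^\top\in\mathcal{S}^m_+$ and $\mathrm{Tr}(v^\top v)=\|v\|_F^2$): since $Q$ is polynomial on $\mathcal{S}^m$ and $\nabla_X Q(X)$ is symmetric, the chain rule with $X=vv^\top$ gives $\nabla_v\,Q(vv^\top)=2\,\nabla_X Q(vv^\top)\,v$, while $\nabla_v\big(\mathrm{Tr}(v^\top v)-1\big)=2v$; hence $\nabla_v\widetilde{\mathcal{L}}_\lambda(v)=2\big(\nabla_X Q(vv^\top)-\lambda I\big)v=2S_v v$. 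So a stationary point of the (unconstrained-in-$v$) objective \eqref{eq:LagrangianTr} is precisely a $v$ with $S_v v=0$, and this equation also fixes the multiplier, since left-multiplying by $v^\top$ and imposing the normalisation $\mathrm{Tr}(v^\top v)=1$ yields $\lambda=\mathrm{Tr}\big(v^\top\nabla_X Q(vv^\top)\,v\big)$; so far positivity plays no role.

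Next I would write out the KKT conditions of \eqref{problem:sdmiTr}: with a matrix multiplier $S\succeq 0$ for $X\succeq 0$ and a scalar $\lambda$ for $\mathrm{Tr}(X)=1$, these are $\nabla_X Q(X)=S+\lambda I$, feasibility $X\succeq 0$, $S\succeq 0$, $\mathrm{Tr}(X)=1$, and complementarity $\langle S,X\rangle=0$. Substituting $X=vv^\top$: stationarity forces $S=\nabla_X Q(vv^\top)-\lambda I=S_v$; the trace condition is exactly the constraint of \eqref{e:pmi-ours-factorized}; and, since $\langle S_v,vv^\top\rangle=\mathrm{Tr}\big(v^\top S_v v\big)$ with $S_v\succeq 0$, complementarity is equivalent to $v^\top S_v v=0$, hence to $S_v^{1/2}v=0$, hence to $S_v v=0$, which is the stationarity equation of the previous paragraph. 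Consequently a stationary point $v$ of \eqref{eq:LagrangianTr} lifts to a KKT point $X=vv^\top$ of \eqref{problem:sdmiTr} if and only if, in addition to $S_v v=0$, one has $S_v\succeq 0$; this proves the claimed equivalence at the level of critical points, the inequality $S_v\succeq 0$ being exactly the dual-feasibility certificate that would otherwise be missing.

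Finally, to turn this correspondence of critical points into the asserted minimizer-representation, I would appeal to the Burer--Monteiro analysis of low-rank SDPs \cite{journee2008low}. At the factorization $X^*=v^*(v^*)^\top$ of a global optimizer $X^*$ of \eqref{problem:sdmiTr}, the KKT conditions of \eqref{problem:sdmiTr} --- available under Slater's condition, and automatic when $Q$ is convex --- deliver a multiplier $\lambda$ with $S_{v^*}\succeq 0$ and $S_{v^*}v^*=0$, so $v^*$ is a stationary point of \eqref{eq:LagrangianTr} satisfying the stated positivity; conversely, taking the factorization rank $r$ at least the rank of $X^*$ (strictly larger being the regime in which \cite{journee2008low} excludes spurious second-order stationary points), any rank-deficient second-order stationary $v$ of the factorized problem has $S_v\succeq 0$ and $vv^\top$ globally optimal for \eqref{problem:sdmiTr}. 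Composing with the factorization map $h:v\mapsto vv^\top$ then gives the minimizer-representation precisely under the condition $S_v\succeq 0$. I expect the main obstacle to be exactly this passage from stationarity to global optimality: for nonconvex $Q$ it genuinely requires the second-order (and rank-deficiency) hypothesis of \cite{journee2008low}, so the ``if and only if'' should be understood over that class of points, with $S_v\succeq 0$ the precise extra ingredient --- the dual feasibility inequality --- that bridges a factorized critical point and a genuine solution of the PMI.
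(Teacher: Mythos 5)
Your argument is correct and follows essentially the same route as the paper, which does not spell out a proof at all but simply defers to \cite{journee2008low}: you reconstruct exactly that reference's reasoning (matching the stationarity equation $\nabla_v\widetilde{\mathcal{L}}_\lambda(v)=2S_vv=0$ of the factorized problem with the KKT system of \eqref{problem:sdmiTr}, with $S_v\succeq 0$ as the dual-feasibility certificate, and invoking the second-order/rank-deficiency analysis to pass from stationarity to global optimality). Your version is in fact more explicit than the paper's, and your caveat that the ``if and only if'' must be read over second-order stationary points for nonconvex $Q$ is a fair and accurate qualification of the cited result.
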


This is proven in \cite{journee2008low}.
Some of these analyses could be simplified considerably by considering:

\begin{proposition}
\label{prop:factorizationtame}
The graph of $Q(v v^\top)$ is semialgebraic and thus tame.
\end{proposition}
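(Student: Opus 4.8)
The plan is to show that the map $v \mapsto Q(vv^\top)$ is a polynomial map (more precisely, each of its components is a real polynomial in the entries of $v$), from which tameness follows immediately by axiom $A_1$ of Definition~\ref{def:R-str} together with the fact that $\mathbb{R}_{\rm{semialg}}$ is an o-minimal structure (Example~\ref{ex:tamestructure}). First I would unwind the composition: writing $v \in \mathbb{R}^{r\times m}$ with entries $v_{ai}$, the matrix $X = vv^\top$ has entries $X_{ij} = \sum_{a=1}^{r} v_{ai}v_{aj}$, so each $X_{ij}$ is a polynomial (indeed a homogeneous quadratic) in the $rm$ variables $v_{ai}$. Since $Q$ is a matrix-valued polynomial in the entries of its argument, substituting the polynomials $X_{ij}(v)$ into $Q$ yields, by closure of $\mathbb{R}[v_{11},\ldots,v_{rm}]$ under composition and the ring operations, that every entry of $Q(vv^\top)$ is again a polynomial in the $v_{ai}$.

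Next I would invoke the earlier observation that any polynomial with real coefficients is an $S$-definable map for any $\R$-structure $S$, and in particular for $\mathbb{R}_{\rm{semialg}}$, which is o-minimal; hence the graph $\Gamma(Q(vv^\top)) \subseteq \mathbb{R}^{rm}\times\mathbb{R}^{m\times m}$ is a semialgebraic set, i.e.\ definable in the smallest o-minimal structure. This is exactly the assertion that $Q(vv^\top)$ is tame. If one prefers a more structural route, one can instead note that $h\colon v \mapsto vv^\top$ is semialgebraic (its graph is cut out by the polynomial equations $X_{ij} = \sum_a v_{ai}v_{aj}$), that $Q$ is semialgebraic, and that the composition of definable maps is definable, which gives the same conclusion; I would present the direct polynomial argument as the main line and mention this alternative in a sentence.

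I do not expect a genuine obstacle here, since the statement is essentially a bookkeeping consequence of the closure properties already catalogued in Section~\ref{sec:tame_geometry}. The only point deserving care is a mild abuse of notation: $Q$ in \eqref{problem:sdmi}--\eqref{e:pmi-ours-factorized} is described as "a matrix-valued polynomial", so one should be explicit about whether "tame" refers to the graph of the $\mathbb{R}^{m\times m}$-valued map or, after composing with a scalarization (e.g.\ a trace or a fixed matrix inner product) relevant to the optimization, to a scalar objective; in either case polynomiality is preserved, so the conclusion is unaffected. I would therefore close by remarking that the same argument shows the generalized Lagrangian \eqref{eq:LagrangianTr} is semialgebraic, since it is obtained from $Q(vv^\top)$ by subtracting the polynomial $\lambda(\mathrm{Tr}(v^\top v)-1)$, keeping the discussion consistent with the preceding subsection.
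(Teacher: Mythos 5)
Your argument is correct and is essentially the paper's own reasoning: the paper simply remarks that the result is "trivial due to the polynomial nature of the objective function," and you have spelled out exactly that observation — each entry of $vv^\top$ is quadratic in the entries of $v$, so $Q(vv^\top)$ is a polynomial map and its graph is semialgebraic, hence tame. Your closing remark about the scalarization of $Q$ and the semialgebraicity of the generalized Lagrangian \eqref{eq:LagrangianTr} is a sensible clarification but not needed for the statement itself.
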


Of course, this is trivial due to the polynomial nature of the objective function. With Proposition \ref{prop:factorizationtame},
we thus know that there exists a descent guarantee along any subgradient trajectory \cite{Helton2003}, which could 
strengthen Theorem 4.1 in \cite{burer2005local}.
We also know that Newton's method converges fast on the factorization \cite{bolte2009tame},
which has been observed empirically \cite{liu2017hybrid}, but which has not been proven, yet.
Plausibly, more recent analyses \cite[e.g.]{boumal2016non,boumal2020deterministic}
could also benefit.

In practice, however, such a PMI representation is not particularly convenient since the optimal rank for the decomposition is not known a priori. Therefore, factorization provides a family of possible representations, parametrized by the rank $r$ of $v$. The actual representation $h$ required might be hard to estimate. 

\begin{remark}
Notice that this approach has rich links to numerical algebraic geometry
and its uses in linear semidefinite programming \cite{naldi2015exact,Din2016,naldi2018solving,henrion2021exact}. 
Notably, for $0\leq r \leq m-1$, \cite{naldi2015exact,Din2016,naldi2018solving,henrion2021exact} consider the real algebraic set
\begin{equation}
    \mathcal{D}_r\coloneqq \{ x\in \mathbb{R}^k\, :\, {\rm rank}(X)\leq r \}.
\end{equation}
This is an algebraic variety, known as the determinental variety. We expect that the representation through factorisation presented in this Section would mainly be appropriate for problems with rank deficient optimizers \cite{journee2008low}, i.e., where the optimum lies in $\mathcal{D}_r$. 
\end{remark}










\subsection{Tame representation using log-det barrier}\label{sec:logdet}

Perhaps the most natural tame representation utilizes the log-det barrier function $\phi$
\cite[Proposition 5.4.5]{nesterov1994interior}
\cite[Chapter 11]{Boyd2004}, which for a positive definite matrix $X$ is defined by
\begin{equation}
    \phi(X)\coloneqq\begin{cases}-\log \det X,\qquad &X\succ0, \\ +\infty,\qquad &\text{otherwise.}\end{cases}
\end{equation} 
This is known \cite[Remark 5.4.1]{nesterov1994interior} to be the best possible self-concordant barrier 
function for the cone of positive semidefinite matrices. 



As above, the idea in using the log-det barrier is to reformulate the PMI Prob. \eqref{problem:sdmi}
as an unconstrained problem.
We reformulate the Generalized Lagrangian $\widetilde{\mathcal{L}}_\lambda$ as 
\begin{equation}
    \widetilde{\mathcal{L}}_\lambda = b(X)+ \lambda \phi(X).
\end{equation}
Essentially, the log-det barrier approximates the non-smooth indicator function of the Generalized Lagrangian \eqref{eq:Lagrangian} with a smooth function. The problem to solve becomes
\begin{align}
    \inf_{X}\,\,\widetilde{\mathcal{L}}_\infty.
\end{align}

At the same time:

\begin{theorem}
The map $ b(X)+ \lambda \phi(X)$ is definable in a tame structure.
\end{theorem}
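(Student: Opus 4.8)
The plan is to exhibit $b(X) + \lambda\phi(X)$ as a definable function in a known o-minimal structure, namely $\mathbb{R}_{\mathrm{an,exp}}$ (or already $\mathbb{R}_{\exp}$, since here we only need $\log$ and polynomials), by decomposing it into constituents each of which has been shown definable in the background section and then invoking closure of definable maps under the operations $A_1$--$P_3$ of Definition \ref{def:R-str} together with composition. First I would observe that $b(X)$ is a polynomial in the entries of $X$, hence semialgebraic, hence definable in any $\R$-structure by axiom $A_1$. Next, $\det X$ is a polynomial in the entries of $X$ (the Leibniz formula), so $X \mapsto \det X$ is semialgebraic, and its restriction to the semialgebraic open set $\mathcal{S}^m_{++}=\{X : \det X > 0 \text{ and leading principal minors} > 0\}$ is still definable. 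The map $t \mapsto \log t$ on $(0,\infty)$ is definable in $\mathbb{R}_{\exp}$ (Example \ref{ex: exp}), being the inverse of the definable injective exponential map and hence definable by item \ref{inverse_definable} of the examples following Definition \ref{def:definable}. Composing, $X \mapsto -\log\det X$ is definable on $\mathcal{S}^m_{++}$ as a composition of definable maps, and scaling by $\lambda$ and adding $b(X)$ preserves definability.

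The one genuine subtlety is the value $+\infty$ that $\phi$ takes off the cone $\mathcal{S}^m_{++}$, so that $\widetilde{\mathcal{L}}_\lambda$ is an extended-real-valued function rather than a map $\R^n \to \R$ in the literal sense of the earlier definitions. I would handle this in the same spirit as the treatment of the indicator function of the generalized Lagrangian: either (i) restrict attention to the definable domain $\mathcal{S}^m_{++}$, on which $\widetilde{\mathcal{L}}_\lambda$ is an honest real-valued definable map and its graph is a definable subset of $\mathcal{S}^m_{++}\times\R$; or (ii) argue that the epigraph $\{(X,s) : s \ge b(X) + \lambda\phi(X)\}$ is definable, being the intersection of the semialgebraic condition ``$X\notin\mathcal{S}^m_{++}$ gives the empty slice'' with the definable set $\{(X,s): X\in\mathcal{S}^m_{++},\ s \ge b(X) - \lambda\log\det X\}$, which is definable because $\ge$ is a definable relation (axiom $A_2$) applied to a definable function. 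Since the relevant o-minimal structures are closed under these Boolean and projection operations ($P_1$, $P_3$), the extended-valued function is definable in the appropriate sense.

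The ``large $\lambda$ limit'' clause in the notion of tame representation requires a brief remark: for each fixed finite $\lambda>0$ the function is definable as above, and the structure does not depend on $\lambda$, so the conclusion is uniform in $\lambda$; one does not need to take an actual limit, only to note that every member of the family lies in the fixed o-minimal structure $\mathbb{R}_{\exp}$ (equivalently, one can absorb $\lambda$ as a parameter and use that parametrized families of definable sets are definable). Finally I would remark that, unlike the characteristic-polynomial representation of Section \ref{sec:charpol}, this representation genuinely leaves the semialgebraic world — $\log\det$ is not semialgebraic — which is precisely why the self-concordance and first-order-method machinery cited in the introduction becomes available.

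The main obstacle I anticipate is not any deep o-minimality input — all the needed closure properties are already recorded in the excerpt — but rather the bookkeeping around the extended-real value $+\infty$ and the mild gap between the paper's definition of ``definable map $\R^n\to\R$'' (via its graph) and the barrier function's true status as an extended-real-valued convex function; the cleanest fix is to phrase the statement and proof in terms of definability of the epigraph (or of the restriction to the open cone), and I would make that choice explicit at the start of the proof.
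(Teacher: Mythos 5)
Your argument is essentially the paper's own proof: the paper likewise observes that $\log$ is definable in $\mathbb{R}_{\exp}$ as the inverse of the definable injective exponential, that $\det$ and $b$ are polynomials and hence definable, and that definability is preserved under composition and addition. Your additional bookkeeping — restricting to the definable cone $\mathcal{S}^m_{++}$ to handle the extended value $+\infty$, and noting that $\lambda$ enters only as a parameter — is correct and in fact addresses points the paper's two-line proof passes over in silence.
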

\begin{proof}The log function is the inverse of the exp and a definable map due to Example \ref{ex: exp} and 
Example \ref{def:tamestructure}. Hence, the graph of $\phi$ is tame as it is the composition of two definable maps. To conclude the proof, recall that $b(X)$ is a polynomial hence a definable map.
\end{proof}

With this result, we conclude that the log-det barrier function provides a tame representation of the PMI (Prob. \ref{problem:sdmi}).

Notice that leading first-order solvers for PMI, such as PENBMI \cite{kovcvara2003pennon,kocvara2005penbmi,henrion2005solving},
come with \emph{ad hoc} proofs of local convergence, without using the results from tame geometry \cite{Kurdyka2000}.
At the same time, results from tame geometry \cite{bolte2009tame} could be used to show that Newton's 
method converges fast, for instance. 







\subsection{Tame representations using alternative barrier functions}\label{sec:barrier2}

While the log-det barrier function is the natural choice for numerical optimization methods \cite{kocvara2005penbmi,henrion2005solving},
a wealth of other options exist \cite[Chapter 5]{nesterov1994interior} \cite[Section 3.1]{fiala2013penlab}. 
Let us consider rational powers of the determinant, instead of the logarithm of the determinant above: 
\begin{equation}
    \phi(X)\coloneqq\begin{cases}- \det^{r} X,\qquad &X\succ0, \\ +\infty,\qquad &\text{otherwise.}\end{cases}
\end{equation} 
for some rational $0 < r < 1$ following Example 3.1 of \cite{nesterov1994interior}. 
This is \cite[p. 239]{nesterov1994interior} positive-definite representable, which should allow for analyses similar to the sections above, e.g., Section \ref{sec:charpol}.


\subsection{Tame representation using a bound}\label{sec:boundbased}

In PMI (Prob. \ref{problem:pmi}), let us denote the global optimum of the objective function $b^*$.
If we knew the optimum $b^*$, we could constrain the feasible set of the optimization problem 
such that all feasible points were the global optima:
$\big\{P(x,y) \succeq 0 \big\} \cap \big\{ b(y) \le b^*\big\}$.

We do not know $b^*$ a priori, but we can work with an estimate $\hat b$. 
Then, repeatedly deciding the emptiness of the intersection: 
\begin{align}
\label{eq:intersection}
\big\{P(x,y) \succeq 0 \big\} \cap \big\{ b(y) \le \hat b \big\}
\end{align}
is clearly equivalent to solving the original problem \eqref{problem:pmi}. 
Trivially, one may consider, e.g., bisection on $\hat b$:
whenever the intersection \eqref{eq:intersection} is empty, multiply $\hat b$ by a number between 1 and 2.
Whenever the intersection \eqref{eq:intersection} is non-empty, halve $\hat b$.

Thus: 

 \begin{proposition}
There exists $\hat{b}$ such that one minimizer-representation of the Generalized Lagrangian of the PMI \eqref{problem:pmi} is
\begin{equation}
\begin{aligned}
\label{eq:boundbased}
\widetilde{\mathcal{L}}_{\hat b}(x,y) := \min_{x,y} & \quad 1\\
    {\rm s.t. } & \quad P(x,y) \succeq 0 \\
                & \quad b(y) \le \hat b.
\end{aligned}
\end{equation}
\end{proposition}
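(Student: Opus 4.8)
The plan is to prove the statement with $\hat b$ taken to be (any value at least) the global optimum $B^{*}$ of the PMI \eqref{problem:pmi} (denoted $b^{*}$ in the discussion above); granting the standing assumption that this infimum is attained, the argument is short and amounts to unwinding the definitions. First I would read the displayed program \eqref{eq:boundbased} as the value function of a feasibility problem, i.e.\ as the function
\[
\widetilde{\mathcal{L}}_{\hat b}(x,y)=\begin{cases} 1, & P(x,y)\succeq 0 \text{ and } b(y)\le \hat b,\\ +\infty, & \text{otherwise,}\end{cases}
\]
which is the $\lambda\to\infty$ analogue of the generalized Lagrangian \eqref{eq:Lagrangian}, with the constant cost $1$ in place of $b(y)$. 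Its set of global minimizers is exactly its feasible region $\{(x,y):P(x,y)\succeq 0\}\cap\{b(y)\le \hat b\}$. Taking $\hat b=B^{*}$, every PMI-feasible point satisfies $b(y)\ge B^{*}$, so the extra constraint $b(y)\le \hat b$ forces $b(y)=B^{*}$ and this feasible region is precisely the set of global optimizers of \eqref{problem:pmi}; for any $\hat b\ge B^{*}$ it is a (possibly larger) set that still contains all PMI optima.

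Second, I would compare this with the minimizers of the generalized Lagrangian $\mathcal{L}_\lambda$. In the large-$\lambda$ limit (and, as in the proof of the characteristic-polynomial proposition, already for sufficiently large finite $\lambda$), a global minimizer $(x^{*},y^{*})$ of $\mathcal{L}_\lambda$ must be PMI-feasible and must satisfy $b(y^{*})=B^{*}$, since otherwise a genuine optimizer would give a strictly smaller value once $\lambda$ is large enough. Hence $(x^{*},y^{*})$ lies in the feasible region of \eqref{eq:boundbased}, i.e.\ it is a global minimizer of $\widetilde{\mathcal{L}}_{\hat b}$. Since both functions have the same domain $\mathbb{R}^{k}\times\mathbb{R}^{l}$, the map $h$ required by the definition of minimizer-representation may be taken to be the identity: a global minimizer of $\mathcal{L}_\lambda$ then equals $h$ applied to a global minimizer of $\widetilde{\mathcal{L}}_{\hat b}$, which is all that is demanded. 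This completes the argument.

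The step I expect to be the main (and essentially the only) obstacle is bookkeeping around non-uniqueness and attainment rather than anything deep. The objective in \eqref{eq:boundbased} is constant, so $\widetilde{\mathcal{L}}_{\hat b}$ typically has a whole continuum of global minimizers; one must therefore invoke explicitly the clause of the definition of minimizer-representation that asks only a single pair of minimizers to correspond, and exhibit the pair $\big((x^{*},y^{*}),(x^{*},y^{*})\big)$. One also has to assume the infimum $B^{*}$ is attained --- which is where the boundedness hypothesis $\|y\|_{\infty}<1$ enters, together with the fact that the relevant sublevel set of the continuous map $(x,y)\mapsto b(y)$ on the closed PMI-feasible set can be taken compact --- since otherwise neither $\mathcal{L}_\lambda$ nor $\widetilde{\mathcal{L}}_{\hat b}$ has a global minimizer and the statement is vacuous. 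Finally I would remark, echoing the bisection scheme preceding the proposition, that although $B^{*}$ is not known a priori, it is the limit of the bisection on $\hat b$, so the representation is constructive in the limit, and, as noted there, repeatedly deciding emptiness of the intersection \eqref{eq:intersection} is already equivalent to solving \eqref{problem:pmi}.
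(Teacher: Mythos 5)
Your proposal is correct and follows essentially the same route as the paper, which justifies the proposition only informally via the preceding discussion (take $\hat b$ at the optimal value $b^*$ so that the intersection \eqref{eq:intersection} consists exactly of the global optimizers, located by bisection); you merely make explicit the choice $h=\mathrm{id}$, the attainment assumption, and the non-uniqueness clause of the minimizer-representation definition. No gaps beyond what the paper itself leaves implicit.
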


Such a representation suffers from a number of issues:
\begin{itemize}
    \item the fact that $\hat b$ is hard to find, in general;
    \item testing feasibility of semidefinite programming is hard, in general;
    \item while the intersection of two semialgebraic sets is semialgebraic,
    and $P(x,y) \succeq 0$ is semialgebraic, we may need one of the representations of Sections \ref{sec:charpol}--\ref{sec:barrier2}
 in a numerical routine. 
\end{itemize}
Still, the convexifications of Section \ref{sec:convexifications} can be seen as 
using this minimizer representation by asking for the 
non-negative polynomial $b(y) - b^*$ to belong to the truncated quadratic module.
Thus, this representation may be one of the most often used, in practice.

\section{Conclusions}\label{sec:discussion}

We have shown how to reformulate PMIs \eqref{problem:pmi} within tame geometry, to make it possible to utilize first-order algorithms with recently developed convergence guarantees \cite{davis2020stochastic,bolte2021conservative}.  
The interest for such reformulations lies precisely in the fact that if a representation of Prob. \eqref{problem:pmi} is Whitney stratifiable, then first-order methods might be preferable compared to the convexifications of Sec. \ref{sec:PMIs}.

One obvious question that we can ask is what representation is best suited to a particular application. 
It seems plausible that the representations of Secs. \ref{sec:charpol} and \ref{sec:logdet} are suitable only for full-rank optimizers, due to the nature of the Generalized Lagrangian. Similarly, the representation of Sec. \ref{sec:factorization} would be most useful for low-rank minimizers.
As we can tell from Corollary 3.1 in \cite{amelunxen2015intrinsic}, the low-rank minimizers may actually 
be quite rare, in general, although they are very desirable (and more common) in particular applications.
See also \cite{fawzi2018lower}.
If we do not have reasons to believe that the low-rank minimizers are available in a particular instance, perhaps Secs. \ref{sec:charpol} and \ref{sec:logdet} are a great starting point.

An independent consideration may arise when one considers an intersection with further linear matrix inequalities. 
The work of Waldspurger and Waters \cite{Waldspurger2020} showed that if the size of the factors is less than the square root of the number of linear constraints, poor behavior may arise in the representation of Sec. \ref{sec:factorization}, even if we deal with linear matrix inequalities. 
This may suggest that the representation of Sec. \ref{sec:factorization} is particularly suitable for problems with few linear constraints in such extensions of polynomial matrix inequalities. 

A related question asks how much we can relax the large $\lambda$ limit constraints in the representations. From some numerical experiments, it seems hard to attain the true global minimizers using any of the Generalized Lagrangians we have presented here for a finite value of $\lambda$, but for any given $\epsilon$ error, there is a finite value of $\lambda$ that suffices. 
Bounding the value of $\lambda$ as a function of $\epsilon$ such that we can guarantee that it suffices seems rather hard analytically. 
A more in-depth experimental study, or perhaps a ``how-to'' procedure, would be a good venture for future research. 

Many natural questions arise in connection with the notion of a tame representation of sets, more generally. 
A question, probably of mathematical flavor, is whether the map $h$ is unique or not.
Another natural question is whether every map $f$ is thus representable by some $h$, or what conditions ensure that $f$ is representable.


\begin{acknowledgments}
The authors thank Didier Henrion for useful comments, insights, and suggestions.

J.A. is supported by the Government of Ireland Postgraduate Scholarship Programme GOIPG/2020/910 of the Irish Research Council. G.K. and J.M. acknowledge support of the OP RDE funded
project CZ.02.1.01/0.0/0.0/16\_019/0000765 “Research Center for Informatics”. 
J.M. acknowledges support of the
Czech Science Foundation (22-15524S).
\end{acknowledgments}

\bibliography{pmi}

\begin{thebibliography}{82}%
\makeatletter
\providecommand \@ifxundefined [1]{%
 \@ifx{#1\undefined}
}%
\providecommand \@ifnum [1]{%
 \ifnum #1\expandafter \@firstoftwo
 \else \expandafter \@secondoftwo
 \fi
}%
\providecommand \@ifx [1]{%
 \ifx #1\expandafter \@firstoftwo
 \else \expandafter \@secondoftwo
 \fi
}%
\providecommand \natexlab [1]{#1}%
\providecommand \enquote  [1]{``#1''}%
\providecommand \bibnamefont  [1]{#1}%
\providecommand \bibfnamefont [1]{#1}%
\providecommand \citenamefont [1]{#1}%
\providecommand \href@noop [0]{\@secondoftwo}%
\providecommand \href [0]{\begingroup \@sanitize@url \@href}%
\providecommand \@href[1]{\@@startlink{#1}\@@href}%
\providecommand \@@href[1]{\endgroup#1\@@endlink}%
\providecommand \@sanitize@url [0]{\catcode `\\12\catcode `\$12\catcode
  `\&12\catcode `\#12\catcode `\^12\catcode `\_12\catcode `\%12\relax}%
\providecommand \@@startlink[1]{}%
\providecommand \@@endlink[0]{}%
\providecommand \url  [0]{\begingroup\@sanitize@url \@url }%
\providecommand \@url [1]{\endgroup\@href {#1}{\urlprefix }}%
\providecommand \urlprefix  [0]{URL }%
\providecommand \Eprint [0]{\href }%
\providecommand \doibase [0]{https://doi.org/}%
\providecommand \selectlanguage [0]{\@gobble}%
\providecommand \bibinfo  [0]{\@secondoftwo}%
\providecommand \bibfield  [0]{\@secondoftwo}%
\providecommand \translation [1]{[#1]}%
\providecommand \BibitemOpen [0]{}%
\providecommand \bibitemStop [0]{}%
\providecommand \bibitemNoStop [0]{.\EOS\space}%
\providecommand \EOS [0]{\spacefactor3000\relax}%
\providecommand \BibitemShut  [1]{\csname bibitem#1\endcsname}%
\let\auto@bib@innerbib\@empty
\bibitem [{\citenamefont {VanAntwerp}\ and\ \citenamefont
  {Braatz}(2000)}]{VanAntwerp2000}%
  \BibitemOpen
  \bibfield  {author} {\bibinfo {author} {\bibfnamefont {J.~G.}\ \bibnamefont
  {VanAntwerp}}\ and\ \bibinfo {author} {\bibfnamefont {R.~D.}\ \bibnamefont
  {Braatz}},\ }\bibfield  {title} {\bibinfo {title} {A tutorial on linear and
  bilinear matrix inequalities},\ }\href
  {https://doi.org/10.1016/s0959-1524(99)00056-6} {\bibfield  {journal}
  {\bibinfo  {journal} {Journal of Process Control}\ }\textbf {\bibinfo
  {volume} {10}},\ \bibinfo {pages} {363} (\bibinfo {year} {2000})}\BibitemShut
  {NoStop}%
\bibitem [{\citenamefont {Henrion}\ and\ \citenamefont
  {Lasserre}(2006)}]{Henrion2006}%
  \BibitemOpen
  \bibfield  {author} {\bibinfo {author} {\bibfnamefont {D.}~\bibnamefont
  {Henrion}}\ and\ \bibinfo {author} {\bibfnamefont {J.-B.}\ \bibnamefont
  {Lasserre}},\ }\bibfield  {title} {\bibinfo {title} {Convergent relaxations
  of polynomial matrix inequalities and static output feedback},\ }\href
  {https://doi.org/10.1109/tac.2005.863494} {\bibfield  {journal} {\bibinfo
  {journal} {{IEEE} Transactions on Automatic Control}\ }\textbf {\bibinfo
  {volume} {51}},\ \bibinfo {pages} {192} (\bibinfo {year} {2006})}\BibitemShut
  {NoStop}%
\bibitem [{\citenamefont {Henrion}\ \emph {et~al.}(2005)\citenamefont
  {Henrion}, \citenamefont {Lofberg}, \citenamefont {Kocvara},\ and\
  \citenamefont {Stingl}}]{henrion2005solving}%
  \BibitemOpen
  \bibfield  {author} {\bibinfo {author} {\bibfnamefont {D.}~\bibnamefont
  {Henrion}}, \bibinfo {author} {\bibfnamefont {J.}~\bibnamefont {Lofberg}},
  \bibinfo {author} {\bibfnamefont {M.}~\bibnamefont {Kocvara}},\ and\ \bibinfo
  {author} {\bibfnamefont {M.}~\bibnamefont {Stingl}},\ }\bibfield  {title}
  {\bibinfo {title} {Solving polynomial static output feedback problems with
  penbmi},\ }in\ \href@noop {} {\emph {\bibinfo {booktitle} {Proceedings of the
  44th IEEE Conference on Decision and Control}}}\ (\bibinfo {organization}
  {IEEE},\ \bibinfo {year} {2005})\ pp.\ \bibinfo {pages}
  {7581--7586}\BibitemShut {NoStop}%
\bibitem [{\citenamefont {Tyburec}\ \emph {et~al.}(2021)\citenamefont
  {Tyburec}, \citenamefont {Zeman}, \citenamefont {Kru{\v{z}}{\'\i}k},\ and\
  \citenamefont {Henrion}}]{tyburec2021global}%
  \BibitemOpen
  \bibfield  {author} {\bibinfo {author} {\bibfnamefont {M.}~\bibnamefont
  {Tyburec}}, \bibinfo {author} {\bibfnamefont {J.}~\bibnamefont {Zeman}},
  \bibinfo {author} {\bibfnamefont {M.}~\bibnamefont {Kru{\v{z}}{\'\i}k}},\
  and\ \bibinfo {author} {\bibfnamefont {D.}~\bibnamefont {Henrion}},\
  }\bibfield  {title} {\bibinfo {title} {Global optimality in minimum
  compliance topology optimization of frames and shells by
  moment-sum-of-squares hierarchy},\ }\href@noop {} {\bibfield  {journal}
  {\bibinfo  {journal} {Structural and Multidisciplinary Optimization}\
  }\textbf {\bibinfo {volume} {64}},\ \bibinfo {pages} {1963} (\bibinfo {year}
  {2021})}\BibitemShut {NoStop}%
\bibitem [{\citenamefont {Bondar}\ \emph {et~al.}(2022)\citenamefont {Bondar},
  \citenamefont {Popovych}, \citenamefont {Jacobs}, \citenamefont {Korpas},\
  and\ \citenamefont {Marecek}}]{bondar2022recovering}%
  \BibitemOpen
  \bibfield  {author} {\bibinfo {author} {\bibfnamefont {D.~I.}\ \bibnamefont
  {Bondar}}, \bibinfo {author} {\bibfnamefont {Z.}~\bibnamefont {Popovych}},
  \bibinfo {author} {\bibfnamefont {K.}~\bibnamefont {Jacobs}}, \bibinfo
  {author} {\bibfnamefont {G.}~\bibnamefont {Korpas}},\ and\ \bibinfo {author}
  {\bibfnamefont {J.}~\bibnamefont {Marecek}},\ }\bibfield  {title} {\bibinfo
  {title} {Recovering models of open quantum systems from data via polynomial
  optimization: Towards globally convergent quantum system identification},\
  }\href@noop {} {\bibfield  {journal} {\bibinfo  {journal} {arXiv preprint
  arXiv:2203.17164}\ } (\bibinfo {year} {2022})}\BibitemShut {NoStop}%
\bibitem [{\citenamefont {Aravanis}\ \emph {et~al.}(2022)\citenamefont
  {Aravanis}, \citenamefont {Aspman}, \citenamefont {Korpas},\ and\
  \citenamefont {Marecek}}]{Aravanis2022}%
  \BibitemOpen
  \bibfield  {author} {\bibinfo {author} {\bibfnamefont {C.}~\bibnamefont
  {Aravanis}}, \bibinfo {author} {\bibfnamefont {J.}~\bibnamefont {Aspman}},
  \bibinfo {author} {\bibfnamefont {G.}~\bibnamefont {Korpas}},\ and\ \bibinfo
  {author} {\bibfnamefont {J.}~\bibnamefont {Marecek}},\ }\bibfield  {title}
  {\bibinfo {title} {Identifiability of {Q}uantum {H}amiltonians from
  {D}iscrete-{T}ime {S}amples},\ }\href@noop {} {\bibfield  {journal} {\bibinfo
   {journal} {in preparation}\ } (\bibinfo {year} {2022})}\BibitemShut
  {NoStop}%
\bibitem [{\citenamefont {Murty}\ and\ \citenamefont
  {Kabadi}(1987)}]{Murty1987}%
  \BibitemOpen
  \bibfield  {author} {\bibinfo {author} {\bibfnamefont {K.~G.}\ \bibnamefont
  {Murty}}\ and\ \bibinfo {author} {\bibfnamefont {S.~N.}\ \bibnamefont
  {Kabadi}},\ }\bibfield  {title} {\bibinfo {title} {Some {NP}-complete
  problems in quadratic and nonlinear programming},\ }\href
  {https://doi.org/10.1007/bf02592948} {\bibfield  {journal} {\bibinfo
  {journal} {Mathematical Programming}\ }\textbf {\bibinfo {volume} {39}},\
  \bibinfo {pages} {117} (\bibinfo {year} {1987})}\BibitemShut {NoStop}%
\bibitem [{\citenamefont {Jarre}(2012)}]{jarre2012elementary}%
  \BibitemOpen
  \bibfield  {author} {\bibinfo {author} {\bibfnamefont {F.}~\bibnamefont
  {Jarre}},\ }\bibfield  {title} {\bibinfo {title} {Elementary optimality
  conditions for nonlinear sdps},\ }in\ \href@noop {} {\emph {\bibinfo
  {booktitle} {Handbook on Semidefinite, Conic and Polynomial Optimization}}}\
  (\bibinfo  {publisher} {Springer},\ \bibinfo {year} {2012})\ pp.\ \bibinfo
  {pages} {455--470}\BibitemShut {NoStop}%
\bibitem [{\citenamefont {Kocvara}\ \emph {et~al.}(2005)\citenamefont
  {Kocvara}, \citenamefont {Stingl},\ and\ \citenamefont
  {GbR}}]{kocvara2005penbmi}%
  \BibitemOpen
  \bibfield  {author} {\bibinfo {author} {\bibfnamefont {M.}~\bibnamefont
  {Kocvara}}, \bibinfo {author} {\bibfnamefont {M.}~\bibnamefont {Stingl}},\
  and\ \bibinfo {author} {\bibfnamefont {P.}~\bibnamefont {GbR}},\ }\bibfield
  {title} {\bibinfo {title} {Penbmi user’s guide (version 2.0)},\ }\href@noop
  {} {\bibfield  {journal} {\bibinfo  {journal} {software manual, PENOPT GbR,
  Hauptstrasse A}\ }\textbf {\bibinfo {volume} {31}},\ \bibinfo {pages} {91338}
  (\bibinfo {year} {2005})}\BibitemShut {NoStop}%
\bibitem [{\citenamefont {Diehl}\ \emph {et~al.}(2006)\citenamefont {Diehl},
  \citenamefont {Jarre},\ and\ \citenamefont {Vogelbusch}}]{diehl2006loss}%
  \BibitemOpen
  \bibfield  {author} {\bibinfo {author} {\bibfnamefont {M.}~\bibnamefont
  {Diehl}}, \bibinfo {author} {\bibfnamefont {F.}~\bibnamefont {Jarre}},\ and\
  \bibinfo {author} {\bibfnamefont {C.~H.}\ \bibnamefont {Vogelbusch}},\
  }\bibfield  {title} {\bibinfo {title} {Loss of superlinear convergence for an
  sqp-type method with conic constraints},\ }\href@noop {} {\bibfield
  {journal} {\bibinfo  {journal} {SIAM Journal on Optimization}\ }\textbf
  {\bibinfo {volume} {16}},\ \bibinfo {pages} {1201} (\bibinfo {year}
  {2006})}\BibitemShut {NoStop}%
\bibitem [{\citenamefont {Freund}\ \emph {et~al.}(2007)\citenamefont {Freund},
  \citenamefont {Jarre},\ and\ \citenamefont
  {Vogelbusch}}]{freund2007nonlinear}%
  \BibitemOpen
  \bibfield  {author} {\bibinfo {author} {\bibfnamefont {R.~W.}\ \bibnamefont
  {Freund}}, \bibinfo {author} {\bibfnamefont {F.}~\bibnamefont {Jarre}},\ and\
  \bibinfo {author} {\bibfnamefont {C.~H.}\ \bibnamefont {Vogelbusch}},\
  }\bibfield  {title} {\bibinfo {title} {Nonlinear semidefinite programming:
  sensitivity, convergence, and an application in passive reduced-order
  modeling},\ }\href@noop {} {\bibfield  {journal} {\bibinfo  {journal}
  {Mathematical Programming}\ }\textbf {\bibinfo {volume} {109}},\ \bibinfo
  {pages} {581} (\bibinfo {year} {2007})}\BibitemShut {NoStop}%
\bibitem [{\citenamefont {Sun}\ \emph {et~al.}(2008)\citenamefont {Sun},
  \citenamefont {Sun},\ and\ \citenamefont {Zhang}}]{sun2008rate}%
  \BibitemOpen
  \bibfield  {author} {\bibinfo {author} {\bibfnamefont {D.}~\bibnamefont
  {Sun}}, \bibinfo {author} {\bibfnamefont {J.}~\bibnamefont {Sun}},\ and\
  \bibinfo {author} {\bibfnamefont {L.}~\bibnamefont {Zhang}},\ }\bibfield
  {title} {\bibinfo {title} {The rate of convergence of the augmented
  lagrangian method for nonlinear semidefinite programming},\ }\href@noop {}
  {\bibfield  {journal} {\bibinfo  {journal} {Mathematical Programming}\
  }\textbf {\bibinfo {volume} {114}},\ \bibinfo {pages} {349} (\bibinfo {year}
  {2008})}\BibitemShut {NoStop}%
\bibitem [{\citenamefont {Andreani}\ \emph {et~al.}(2020)\citenamefont
  {Andreani}, \citenamefont {Haeser},\ and\ \citenamefont
  {Viana}}]{andreani2020optimality}%
  \BibitemOpen
  \bibfield  {author} {\bibinfo {author} {\bibfnamefont {R.}~\bibnamefont
  {Andreani}}, \bibinfo {author} {\bibfnamefont {G.}~\bibnamefont {Haeser}},\
  and\ \bibinfo {author} {\bibfnamefont {D.~S.}\ \bibnamefont {Viana}},\
  }\bibfield  {title} {\bibinfo {title} {Optimality conditions and global
  convergence for nonlinear semidefinite programming},\ }\href@noop {}
  {\bibfield  {journal} {\bibinfo  {journal} {Mathematical Programming}\
  }\textbf {\bibinfo {volume} {180}},\ \bibinfo {pages} {203} (\bibinfo {year}
  {2020})}\BibitemShut {NoStop}%
\bibitem [{\citenamefont {Andreani}\ \emph {et~al.}(2021)\citenamefont
  {Andreani}, \citenamefont {G{\'o}mez}, \citenamefont {Haeser}, \citenamefont
  {Mito},\ and\ \citenamefont {Ramos}}]{andreani2021optimality}%
  \BibitemOpen
  \bibfield  {author} {\bibinfo {author} {\bibfnamefont {R.}~\bibnamefont
  {Andreani}}, \bibinfo {author} {\bibfnamefont {W.}~\bibnamefont {G{\'o}mez}},
  \bibinfo {author} {\bibfnamefont {G.}~\bibnamefont {Haeser}}, \bibinfo
  {author} {\bibfnamefont {L.~M.}\ \bibnamefont {Mito}},\ and\ \bibinfo
  {author} {\bibfnamefont {A.}~\bibnamefont {Ramos}},\ }\bibfield  {title}
  {\bibinfo {title} {On optimality conditions for nonlinear conic
  programming},\ }\href@noop {} {\bibfield  {journal} {\bibinfo  {journal}
  {Mathematics of Operations Research}\ } (\bibinfo {year} {2021})}\BibitemShut
  {NoStop}%
\bibitem [{\citenamefont {Scherer}\ and\ \citenamefont
  {Hol}(2005)}]{Scherer2005}%
  \BibitemOpen
  \bibfield  {author} {\bibinfo {author} {\bibfnamefont {C.~W.}\ \bibnamefont
  {Scherer}}\ and\ \bibinfo {author} {\bibfnamefont {C.~W.~J.}\ \bibnamefont
  {Hol}},\ }\bibfield  {title} {\bibinfo {title} {Matrix sum-of-squares
  relaxations for robust semi-definite programs},\ }\href
  {https://doi.org/10.1007/s10107-005-0684-2} {\bibfield  {journal} {\bibinfo
  {journal} {Mathematical Programming}\ }\textbf {\bibinfo {volume} {107}},\
  \bibinfo {pages} {189} (\bibinfo {year} {2005})}\BibitemShut {NoStop}%
\bibitem [{\citenamefont {D{\^u}}(2017)}]{D2018ANO}%
  \BibitemOpen
  \bibfield  {author} {\bibinfo {author} {\bibfnamefont {T.~H.-B.}\
  \bibnamefont {D{\^u}}},\ }\bibfield  {title} {\bibinfo {title} {A note on
  positivstellens{\"a}tze for matrix polynomialsh}\ }(\bibinfo {year}
  {2017})\BibitemShut {NoStop}%
\bibitem [{\citenamefont {Dinh}\ \emph {et~al.}(2021)\citenamefont {Dinh},
  \citenamefont {Ho},\ and\ \citenamefont {Le}}]{Dinh2021}%
  \BibitemOpen
  \bibfield  {author} {\bibinfo {author} {\bibfnamefont {T.~H.}\ \bibnamefont
  {Dinh}}, \bibinfo {author} {\bibfnamefont {M.~T.}\ \bibnamefont {Ho}},\ and\
  \bibinfo {author} {\bibfnamefont {C.~T.}\ \bibnamefont {Le}},\ }\bibfield
  {title} {\bibinfo {title} {Positivstellens\"{a}tze for polynomial matrices},\
  }\href {https://doi.org/10.1007/s11117-021-00816-7} {\bibfield  {journal}
  {\bibinfo  {journal} {Positivity}\ }\textbf {\bibinfo {volume} {25}},\
  \bibinfo {pages} {1295} (\bibinfo {year} {2021})}\BibitemShut {NoStop}%
\bibitem [{\citenamefont {Zheng}\ and\ \citenamefont
  {Fantuzzi}(2021)}]{Zheng_2021}%
  \BibitemOpen
  \bibfield  {author} {\bibinfo {author} {\bibfnamefont {Y.}~\bibnamefont
  {Zheng}}\ and\ \bibinfo {author} {\bibfnamefont {G.}~\bibnamefont
  {Fantuzzi}},\ }\bibfield  {title} {\bibinfo {title} {Sum-of-squares chordal
  decomposition of polynomial matrix inequalities},\ }\bibfield  {journal}
  {\bibinfo  {journal} {Mathematical Programming}\ }\href
  {https://doi.org/10.1007/s10107-021-01728-w} {10.1007/s10107-021-01728-w}
  (\bibinfo {year} {2021})\BibitemShut {NoStop}%
\bibitem [{\citenamefont {Goh}\ \emph {et~al.}(1995)\citenamefont {Goh},
  \citenamefont {Safonov},\ and\ \citenamefont
  {Papavassilopoulos}}]{goh1995global}%
  \BibitemOpen
  \bibfield  {author} {\bibinfo {author} {\bibfnamefont {K.-C.}\ \bibnamefont
  {Goh}}, \bibinfo {author} {\bibfnamefont {M.~G.}\ \bibnamefont {Safonov}},\
  and\ \bibinfo {author} {\bibfnamefont {G.~P.}\ \bibnamefont
  {Papavassilopoulos}},\ }\bibfield  {title} {\bibinfo {title} {Global
  optimization for the biaffine matrix inequality problem},\ }\href@noop {}
  {\bibfield  {journal} {\bibinfo  {journal} {Journal of global optimization}\
  }\textbf {\bibinfo {volume} {7}},\ \bibinfo {pages} {365} (\bibinfo {year}
  {1995})}\BibitemShut {NoStop}%
\bibitem [{\citenamefont {Apkarian}\ and\ \citenamefont
  {Tuan}(2000)}]{apkarian2000robust}%
  \BibitemOpen
  \bibfield  {author} {\bibinfo {author} {\bibfnamefont {P.}~\bibnamefont
  {Apkarian}}\ and\ \bibinfo {author} {\bibfnamefont {H.~D.}\ \bibnamefont
  {Tuan}},\ }\bibfield  {title} {\bibinfo {title} {Robust control via concave
  minimization local and global algorithms},\ }\href@noop {} {\bibfield
  {journal} {\bibinfo  {journal} {IEEE Transactions on Automatic Control}\
  }\textbf {\bibinfo {volume} {45}},\ \bibinfo {pages} {299} (\bibinfo {year}
  {2000})}\BibitemShut {NoStop}%
\bibitem [{\citenamefont {Fukuda}\ and\ \citenamefont
  {Kojima}(2001)}]{fukuda2001branch}%
  \BibitemOpen
  \bibfield  {author} {\bibinfo {author} {\bibfnamefont {M.}~\bibnamefont
  {Fukuda}}\ and\ \bibinfo {author} {\bibfnamefont {M.}~\bibnamefont
  {Kojima}},\ }\bibfield  {title} {\bibinfo {title} {Branch-and-cut algorithms
  for the bilinear matrix inequality eigenvalue problem},\ }\href@noop {}
  {\bibfield  {journal} {\bibinfo  {journal} {Computational Optimization and
  Applications}\ }\textbf {\bibinfo {volume} {19}},\ \bibinfo {pages} {79}
  (\bibinfo {year} {2001})}\BibitemShut {NoStop}%
\bibitem [{\citenamefont {Davis}\ \emph {et~al.}(2020)\citenamefont {Davis},
  \citenamefont {Drusvyatskiy}, \citenamefont {Kakade},\ and\ \citenamefont
  {Lee}}]{davis2020stochastic}%
  \BibitemOpen
  \bibfield  {author} {\bibinfo {author} {\bibfnamefont {D.}~\bibnamefont
  {Davis}}, \bibinfo {author} {\bibfnamefont {D.}~\bibnamefont {Drusvyatskiy}},
  \bibinfo {author} {\bibfnamefont {S.}~\bibnamefont {Kakade}},\ and\ \bibinfo
  {author} {\bibfnamefont {J.~D.}\ \bibnamefont {Lee}},\ }\bibfield  {title}
  {\bibinfo {title} {Stochastic subgradient method converges on tame
  functions},\ }\href@noop {} {\bibfield  {journal} {\bibinfo  {journal}
  {Foundations of computational mathematics}\ }\textbf {\bibinfo {volume}
  {20}},\ \bibinfo {pages} {119} (\bibinfo {year} {2020})}\BibitemShut
  {NoStop}%
\bibitem [{\citenamefont {Bolte}\ and\ \citenamefont
  {Pauwels}(2021)}]{bolte2021conservative}%
  \BibitemOpen
  \bibfield  {author} {\bibinfo {author} {\bibfnamefont {J.}~\bibnamefont
  {Bolte}}\ and\ \bibinfo {author} {\bibfnamefont {E.}~\bibnamefont
  {Pauwels}},\ }\bibfield  {title} {\bibinfo {title} {Conservative set valued
  fields, automatic differentiation, stochastic gradient methods and deep
  learning},\ }\href@noop {} {\bibfield  {journal} {\bibinfo  {journal}
  {Mathematical Programming}\ }\textbf {\bibinfo {volume} {188}},\ \bibinfo
  {pages} {19} (\bibinfo {year} {2021})}\BibitemShut {NoStop}%
\bibitem [{\citenamefont {Grothendieck}(1997)}]{grothendieck1997around}%
  \BibitemOpen
  \bibfield  {author} {\bibinfo {author} {\bibfnamefont {A.}~\bibnamefont
  {Grothendieck}},\ }\href@noop {} {\emph {\bibinfo {title} {Around
  Grothendieck's Esquisse D'un Programme}}},\ Vol.~\bibinfo {volume} {1}\
  (\bibinfo  {publisher} {Cambridge University Press},\ \bibinfo {year}
  {1997})\BibitemShut {NoStop}%
\bibitem [{\citenamefont {van~den Dries}\ and\ \citenamefont
  {Miller}(1996)}]{vandenDries1996}%
  \BibitemOpen
  \bibfield  {author} {\bibinfo {author} {\bibfnamefont {L.}~\bibnamefont
  {van~den Dries}}\ and\ \bibinfo {author} {\bibfnamefont {C.}~\bibnamefont
  {Miller}},\ }\bibfield  {title} {\bibinfo {title} {Geometric categories and
  o-minimal structures},\ }\bibfield  {journal} {\bibinfo  {journal} {Duke
  Mathematical Journal}\ }\textbf {\bibinfo {volume} {84}},\ \href
  {https://doi.org/10.1215/s0012-7094-96-08416-1}
  {10.1215/s0012-7094-96-08416-1} (\bibinfo {year} {1996})\BibitemShut
  {NoStop}%
\bibitem [{\citenamefont {Bolte}\ \emph {et~al.}(2007)\citenamefont {Bolte},
  \citenamefont {Daniilidis}, \citenamefont {Lewis},\ and\ \citenamefont
  {Shiota}}]{bolte2007clarke}%
  \BibitemOpen
  \bibfield  {author} {\bibinfo {author} {\bibfnamefont {J.}~\bibnamefont
  {Bolte}}, \bibinfo {author} {\bibfnamefont {A.}~\bibnamefont {Daniilidis}},
  \bibinfo {author} {\bibfnamefont {A.}~\bibnamefont {Lewis}},\ and\ \bibinfo
  {author} {\bibfnamefont {M.}~\bibnamefont {Shiota}},\ }\bibfield  {title}
  {\bibinfo {title} {Clarke subgradients of stratifiable functions},\
  }\href@noop {} {\bibfield  {journal} {\bibinfo  {journal} {SIAM Journal on
  Optimization}\ }\textbf {\bibinfo {volume} {18}},\ \bibinfo {pages} {556}
  (\bibinfo {year} {2007})}\BibitemShut {NoStop}%
\bibitem [{\citenamefont {Attouch}\ \emph {et~al.}(2011)\citenamefont
  {Attouch}, \citenamefont {Bolte},\ and\ \citenamefont
  {Svaiter}}]{Attouch2011}%
  \BibitemOpen
  \bibfield  {author} {\bibinfo {author} {\bibfnamefont {H.}~\bibnamefont
  {Attouch}}, \bibinfo {author} {\bibfnamefont {J.}~\bibnamefont {Bolte}},\
  and\ \bibinfo {author} {\bibfnamefont {B.~F.}\ \bibnamefont {Svaiter}},\
  }\bibfield  {title} {\bibinfo {title} {Convergence of descent methods for
  semi-algebraic and tame problems: proximal algorithms,
  forward{\textendash}backward splitting, and regularized
  gauss{\textendash}seidel methods},\ }\href
  {https://doi.org/10.1007/s10107-011-0484-9} {\bibfield  {journal} {\bibinfo
  {journal} {Mathematical Programming}\ }\textbf {\bibinfo {volume} {137}},\
  \bibinfo {pages} {91} (\bibinfo {year} {2011})}\BibitemShut {NoStop}%
\bibitem [{\citenamefont {Bolte}\ \emph {et~al.}(2020)\citenamefont {Bolte},
  \citenamefont {Pauwels},\ and\ \citenamefont
  {Rios-Zertuche}}]{bolte2020long}%
  \BibitemOpen
  \bibfield  {author} {\bibinfo {author} {\bibfnamefont {J.}~\bibnamefont
  {Bolte}}, \bibinfo {author} {\bibfnamefont {E.}~\bibnamefont {Pauwels}},\
  and\ \bibinfo {author} {\bibfnamefont {R.}~\bibnamefont {Rios-Zertuche}},\
  }\bibfield  {title} {\bibinfo {title} {{Long term dynamics of the subgradient
  method for Lipschitz path differentiable functions}},\ }\href@noop {}
  {\bibfield  {journal} {\bibinfo  {journal} {arXiv preprint arXiv:2006.00098}\
  } (\bibinfo {year} {2020})}\BibitemShut {NoStop}%
\bibitem [{\citenamefont {Bakker}\ \emph
  {et~al.}(2018{\natexlab{a}})\citenamefont {Bakker}, \citenamefont
  {Brunebarbe},\ and\ \citenamefont {Tsimerman}}]{Bakker2018}%
  \BibitemOpen
  \bibfield  {author} {\bibinfo {author} {\bibfnamefont {B.}~\bibnamefont
  {Bakker}}, \bibinfo {author} {\bibfnamefont {Y.}~\bibnamefont {Brunebarbe}},\
  and\ \bibinfo {author} {\bibfnamefont {J.}~\bibnamefont {Tsimerman}},\ }\href
  {https://doi.org/10.48550/ARXIV.1811.12230} {\bibinfo {title} {o-minimal gaga
  and a conjecture of griffiths}} (\bibinfo {year}
  {2018}{\natexlab{a}})\BibitemShut {NoStop}%
\bibitem [{\citenamefont {Bakker}\ \emph
  {et~al.}(2018{\natexlab{b}})\citenamefont {Bakker}, \citenamefont
  {Klingler},\ and\ \citenamefont {Tsimerman}}]{Bakker2018b}%
  \BibitemOpen
  \bibfield  {author} {\bibinfo {author} {\bibfnamefont {B.}~\bibnamefont
  {Bakker}}, \bibinfo {author} {\bibfnamefont {B.}~\bibnamefont {Klingler}},\
  and\ \bibinfo {author} {\bibfnamefont {J.}~\bibnamefont {Tsimerman}},\ }\href
  {https://doi.org/10.48550/ARXIV.1810.04801} {\bibinfo {title} {Tame topology
  of arithmetic quotients and algebraicity of hodge loci}} (\bibinfo {year}
  {2018}{\natexlab{b}})\BibitemShut {NoStop}%
\bibitem [{\citenamefont {Bakker}\ \emph {et~al.}(2020)\citenamefont {Bakker},
  \citenamefont {Brunebarbe}, \citenamefont {Klingler},\ and\ \citenamefont
  {Tsimerman}}]{Bakker2020}%
  \BibitemOpen
  \bibfield  {author} {\bibinfo {author} {\bibfnamefont {B.}~\bibnamefont
  {Bakker}}, \bibinfo {author} {\bibfnamefont {Y.}~\bibnamefont {Brunebarbe}},
  \bibinfo {author} {\bibfnamefont {B.}~\bibnamefont {Klingler}},\ and\
  \bibinfo {author} {\bibfnamefont {J.}~\bibnamefont {Tsimerman}},\ }\href
  {https://doi.org/10.48550/ARXIV.2006.12403} {\bibinfo {title} {Definability
  of mixed period maps}} (\bibinfo {year} {2020})\BibitemShut {NoStop}%
\bibitem [{\citenamefont {Bakker}\ \emph {et~al.}(2021)\citenamefont {Bakker},
  \citenamefont {Grimm}, \citenamefont {Schnell},\ and\ \citenamefont
  {Tsimerman}}]{Bakker2021}%
  \BibitemOpen
  \bibfield  {author} {\bibinfo {author} {\bibfnamefont {B.}~\bibnamefont
  {Bakker}}, \bibinfo {author} {\bibfnamefont {T.~W.}\ \bibnamefont {Grimm}},
  \bibinfo {author} {\bibfnamefont {C.}~\bibnamefont {Schnell}},\ and\ \bibinfo
  {author} {\bibfnamefont {J.}~\bibnamefont {Tsimerman}},\ }\href
  {https://doi.org/10.48550/ARXIV.2112.06995} {\bibinfo {title} {Finiteness for
  self-dual classes in integral variations of hodge structure}} (\bibinfo
  {year} {2021})\BibitemShut {NoStop}%
\bibitem [{\citenamefont {Grimm}(2021)}]{Grimm2021}%
  \BibitemOpen
  \bibfield  {author} {\bibinfo {author} {\bibfnamefont {T.~W.}\ \bibnamefont
  {Grimm}},\ }\href {https://doi.org/10.48550/ARXIV.2112.08383} {\bibinfo
  {title} {Taming the landscape of effective theories}} (\bibinfo {year}
  {2021})\BibitemShut {NoStop}%
\bibitem [{\citenamefont {Grimm}\ \emph {et~al.}(2022)\citenamefont {Grimm},
  \citenamefont {Lanza},\ and\ \citenamefont {Li}}]{Grimm2022}%
  \BibitemOpen
  \bibfield  {author} {\bibinfo {author} {\bibfnamefont {T.~W.}\ \bibnamefont
  {Grimm}}, \bibinfo {author} {\bibfnamefont {S.}~\bibnamefont {Lanza}},\ and\
  \bibinfo {author} {\bibfnamefont {C.}~\bibnamefont {Li}},\ }\href
  {https://doi.org/10.48550/ARXIV.2206.00697} {\bibinfo {title} {Tameness,
  strings, and the distance conjecture}} (\bibinfo {year} {2022})\BibitemShut
  {NoStop}%
\bibitem [{\citenamefont {Helton}\ and\ \citenamefont
  {Vinnikov}(2003)}]{Helton2003}%
  \BibitemOpen
  \bibfield  {author} {\bibinfo {author} {\bibfnamefont {J.~W.}\ \bibnamefont
  {Helton}}\ and\ \bibinfo {author} {\bibfnamefont {V.}~\bibnamefont
  {Vinnikov}},\ }\href {https://doi.org/10.48550/ARXIV.MATH/0306180} {\bibinfo
  {title} {Linear matrix inequality representation of sets}} (\bibinfo {year}
  {2003})\BibitemShut {NoStop}%
\bibitem [{\citenamefont {Bolte}\ \emph {et~al.}(2009)\citenamefont {Bolte},
  \citenamefont {Daniilidis},\ and\ \citenamefont {Lewis}}]{bolte2009tame}%
  \BibitemOpen
  \bibfield  {author} {\bibinfo {author} {\bibfnamefont {J.}~\bibnamefont
  {Bolte}}, \bibinfo {author} {\bibfnamefont {A.}~\bibnamefont {Daniilidis}},\
  and\ \bibinfo {author} {\bibfnamefont {A.}~\bibnamefont {Lewis}},\ }\bibfield
   {title} {\bibinfo {title} {Tame functions are semismooth},\ }\href@noop {}
  {\bibfield  {journal} {\bibinfo  {journal} {Mathematical Programming}\
  }\textbf {\bibinfo {volume} {117}},\ \bibinfo {pages} {5} (\bibinfo {year}
  {2009})}\BibitemShut {NoStop}%
\bibitem [{\citenamefont {Kurdyka}\ \emph {et~al.}(2000)\citenamefont
  {Kurdyka}, \citenamefont {Mostowski},\ and\ \citenamefont
  {Parusinski}}]{Kurdyka2000}%
  \BibitemOpen
  \bibfield  {author} {\bibinfo {author} {\bibfnamefont {K.}~\bibnamefont
  {Kurdyka}}, \bibinfo {author} {\bibfnamefont {T.}~\bibnamefont {Mostowski}},\
  and\ \bibinfo {author} {\bibfnamefont {A.}~\bibnamefont {Parusinski}},\
  }\bibfield  {title} {\bibinfo {title} {Proof of the gradient conjecture of r.
  thom},\ }\href {https://doi.org/10.2307/2661354} {\bibfield  {journal}
  {\bibinfo  {journal} {The Annals of Mathematics}\ }\textbf {\bibinfo {volume}
  {152}},\ \bibinfo {pages} {763} (\bibinfo {year} {2000})}\BibitemShut
  {NoStop}%
\bibitem [{\citenamefont {Bolte}\ and\ \citenamefont
  {Pauwels}(2020)}]{Bolte2020}%
  \BibitemOpen
  \bibfield  {author} {\bibinfo {author} {\bibfnamefont {J.}~\bibnamefont
  {Bolte}}\ and\ \bibinfo {author} {\bibfnamefont {E.}~\bibnamefont
  {Pauwels}},\ }\href {https://doi.org/10.48550/ARXIV.2001.07999} {\bibinfo
  {title} {Curiosities and counterexamples in smooth convex optimization}}
  (\bibinfo {year} {2020})\BibitemShut {NoStop}%
\bibitem [{\citenamefont {Nesterov}\ and\ \citenamefont
  {Nemirovskii}(1994)}]{nesterov1994interior}%
  \BibitemOpen
  \bibfield  {author} {\bibinfo {author} {\bibfnamefont {Y.}~\bibnamefont
  {Nesterov}}\ and\ \bibinfo {author} {\bibfnamefont {A.}~\bibnamefont
  {Nemirovskii}},\ }\href@noop {} {\emph {\bibinfo {title} {Interior-point
  polynomial algorithms in convex programming}}}\ (\bibinfo  {publisher}
  {SIAM},\ \bibinfo {year} {1994})\BibitemShut {NoStop}%
\bibitem [{\citenamefont {Ko{\v{c}}vara}\ and\ \citenamefont
  {Stingl}(2003)}]{kovcvara2003pennon}%
  \BibitemOpen
  \bibfield  {author} {\bibinfo {author} {\bibfnamefont {M.}~\bibnamefont
  {Ko{\v{c}}vara}}\ and\ \bibinfo {author} {\bibfnamefont {M.}~\bibnamefont
  {Stingl}},\ }\bibfield  {title} {\bibinfo {title} {Pennon: A code for convex
  nonlinear and semidefinite programming},\ }\href@noop {} {\bibfield
  {journal} {\bibinfo  {journal} {Optimization methods and software}\ }\textbf
  {\bibinfo {volume} {18}},\ \bibinfo {pages} {317} (\bibinfo {year}
  {2003})}\BibitemShut {NoStop}%
\bibitem [{\citenamefont {Bolte}\ \emph {et~al.}(2010)\citenamefont {Bolte},
  \citenamefont {Daniilidis}, \citenamefont {Ley},\ and\ \citenamefont
  {Mazet}}]{bolte2010characterizations}%
  \BibitemOpen
  \bibfield  {author} {\bibinfo {author} {\bibfnamefont {J.}~\bibnamefont
  {Bolte}}, \bibinfo {author} {\bibfnamefont {A.}~\bibnamefont {Daniilidis}},
  \bibinfo {author} {\bibfnamefont {O.}~\bibnamefont {Ley}},\ and\ \bibinfo
  {author} {\bibfnamefont {L.}~\bibnamefont {Mazet}},\ }\bibfield  {title}
  {\bibinfo {title} {{Characterizations of {\L}ojasiewicz inequalities:
  subgradient flows, talweg, convexity}},\ }\href@noop {} {\bibfield  {journal}
  {\bibinfo  {journal} {Transactions of the American Mathematical Society}\
  }\textbf {\bibinfo {volume} {362}},\ \bibinfo {pages} {3319} (\bibinfo {year}
  {2010})}\BibitemShut {NoStop}%
\bibitem [{\citenamefont {Bochnak}\ \emph {et~al.}(2013)\citenamefont
  {Bochnak}, \citenamefont {Coste},\ and\ \citenamefont
  {Roy}}]{bochnak2013real}%
  \BibitemOpen
  \bibfield  {author} {\bibinfo {author} {\bibfnamefont {J.}~\bibnamefont
  {Bochnak}}, \bibinfo {author} {\bibfnamefont {M.}~\bibnamefont {Coste}},\
  and\ \bibinfo {author} {\bibfnamefont {M.-F.}\ \bibnamefont {Roy}},\
  }\href@noop {} {\emph {\bibinfo {title} {Real algebraic geometry}}},\
  Vol.~\bibinfo {volume} {36}\ (\bibinfo  {publisher} {Springer Science \&
  Business Media},\ \bibinfo {year} {2013})\BibitemShut {NoStop}%
\bibitem [{\citenamefont {van~den Dries}(1998)}]{Dries98}%
  \BibitemOpen
  \bibfield  {author} {\bibinfo {author} {\bibfnamefont {L.}~\bibnamefont
  {van~den Dries}},\ }\href {https://doi.org/10.1017/CBO9780511525919} {\emph
  {\bibinfo {title} {Tame topology and o-minimal structures}}},\ \bibinfo
  {series} {London Mathematical Society Lecture Note Series}, Vol.\ \bibinfo
  {volume} {248}\ (\bibinfo  {publisher} {Cambridge University Press,
  Cambridge},\ \bibinfo {year} {1998})\ pp.\ \bibinfo {pages}
  {x+180}\BibitemShut {NoStop}%
\bibitem [{\citenamefont {Nicolaescu}(2010)}]{Nicol}%
  \BibitemOpen
  \bibfield  {author} {\bibinfo {author} {\bibfnamefont {L.~I.}\ \bibnamefont
  {Nicolaescu}},\ }\bibfield  {title} {\bibinfo {title} {Tame flows},\ }\href
  {https://doi.org/10.1090/S0065-9266-10-00602-2} {\bibfield  {journal}
  {\bibinfo  {journal} {Mem. Amer. Math. Soc.}\ }\textbf {\bibinfo {volume}
  {208}},\ \bibinfo {pages} {vi+130} (\bibinfo {year} {2010})}\BibitemShut
  {NoStop}%
\bibitem [{\citenamefont {Klinger}(2020)}]{Klinger}%
  \BibitemOpen
  \bibfield  {author} {\bibinfo {author} {\bibfnamefont {B.}~\bibnamefont
  {Klinger}},\ }\href@noop {} {\bibinfo {title} {Tame geometry and hodge
  theory}} (\bibinfo {year} {2020})\BibitemShut {NoStop}%
\bibitem [{\citenamefont {Ponce-Campuzano}\ and\ \citenamefont
  {Maldonado-Aguilar}(2015)}]{PonceCampuzano2015}%
  \BibitemOpen
  \bibfield  {author} {\bibinfo {author} {\bibfnamefont {J.~C.}\ \bibnamefont
  {Ponce-Campuzano}}\ and\ \bibinfo {author} {\bibfnamefont {M.~{\'{A}}.}\
  \bibnamefont {Maldonado-Aguilar}},\ }\bibfield  {title} {\bibinfo {title}
  {Vito volterra{\textquotesingle}s construction of a nonconstant function with
  a bounded, non-riemann integrable derivative},\ }\href
  {https://doi.org/10.1080/17498430.2015.1010771} {\bibfield  {journal}
  {\bibinfo  {journal} {{BSHM} Bulletin: Journal of the British Society for the
  History of Mathematics}\ }\textbf {\bibinfo {volume} {30}},\ \bibinfo {pages}
  {143} (\bibinfo {year} {2015})}\BibitemShut {NoStop}%
\bibitem [{\citenamefont {Henrion}\ \emph {et~al.}(2015)\citenamefont
  {Henrion}, \citenamefont {Naldi},\ and\ \citenamefont {Din}}]{Henrion2015}%
  \BibitemOpen
  \bibfield  {author} {\bibinfo {author} {\bibfnamefont {D.}~\bibnamefont
  {Henrion}}, \bibinfo {author} {\bibfnamefont {S.}~\bibnamefont {Naldi}},\
  and\ \bibinfo {author} {\bibfnamefont {M.~S.~E.}\ \bibnamefont {Din}},\
  }\href {https://doi.org/10.48550/ARXIV.1508.03715} {\bibinfo {title} {Exact
  algorithms for linear matrix inequalities}} (\bibinfo {year}
  {2015})\BibitemShut {NoStop}%
\bibitem [{\citenamefont {Lasserre}(2001)}]{lasserre2001global}%
  \BibitemOpen
  \bibfield  {author} {\bibinfo {author} {\bibfnamefont {J.~B.}\ \bibnamefont
  {Lasserre}},\ }\bibfield  {title} {\bibinfo {title} {Global optimization with
  polynomials and the problem of moments},\ }\href
  {https://doi.org/10.1137/S1052623400366802} {\bibfield  {journal} {\bibinfo
  {journal} {SIAM Journal on Optimization}\ }\textbf {\bibinfo {volume} {11}},\
  \bibinfo {pages} {796} (\bibinfo {year} {2001})},\ \Eprint
  {https://arxiv.org/abs/https://doi.org/10.1137/S1052623400366802}
  {https://doi.org/10.1137/S1052623400366802} \BibitemShut {NoStop}%
\bibitem [{\citenamefont {Hol}\ and\ \citenamefont
  {Scherer}(2004)}]{Hol04sumof}%
  \BibitemOpen
  \bibfield  {author} {\bibinfo {author} {\bibfnamefont {C.~W.~J.}\
  \bibnamefont {Hol}}\ and\ \bibinfo {author} {\bibfnamefont {C.~W.}\
  \bibnamefont {Scherer}},\ }\bibfield  {title} {\bibinfo {title} {Sum of
  squares relaxations for polynomial semidefinite programming},\ }in\
  \href@noop {} {\emph {\bibinfo {booktitle} {Proc. Symp. on Mathematical
  Theory of Networks and Systems (MTNS}}}\ (\bibinfo {year} {2004})\BibitemShut
  {NoStop}%
\bibitem [{\citenamefont {Hol}\ and\ \citenamefont
  {Scherer}(2005)}]{6f374c9990f14911a9cd629936a86c44}%
  \BibitemOpen
  \bibfield  {author} {\bibinfo {author} {\bibfnamefont {C.}~\bibnamefont
  {Hol}}\ and\ \bibinfo {author} {\bibfnamefont {C.}~\bibnamefont {Scherer}},\
  }\bibinfo {title} {A sum-of-squares approach to fixed-order $h_\infty$
  synthesis},\ in\ \href@noop {} {\emph {\bibinfo {booktitle} {Lecture notes in
  control and information sciences}}}\ (\bibinfo {year} {2005})\ pp.\ \bibinfo
  {pages} {45--71}\BibitemShut {NoStop}%
\bibitem [{\citenamefont {Yamashita}\ and\ \citenamefont
  {Yabe}(2015)}]{yamashita2015survey}%
  \BibitemOpen
  \bibfield  {author} {\bibinfo {author} {\bibfnamefont {H.}~\bibnamefont
  {Yamashita}}\ and\ \bibinfo {author} {\bibfnamefont {H.}~\bibnamefont
  {Yabe}},\ }\bibfield  {title} {\bibinfo {title} {A survey of numerical
  methods for nonlinear semidefinite programming},\ }\href@noop {} {\bibfield
  {journal} {\bibinfo  {journal} {Journal of the Operations Research Society of
  Japan}\ }\textbf {\bibinfo {volume} {58}},\ \bibinfo {pages} {24} (\bibinfo
  {year} {2015})}\BibitemShut {NoStop}%
\bibitem [{\citenamefont {Fiala}\ \emph {et~al.}(2013)\citenamefont {Fiala},
  \citenamefont {Ko{\v{c}}vara},\ and\ \citenamefont
  {Stingl}}]{fiala2013penlab}%
  \BibitemOpen
  \bibfield  {author} {\bibinfo {author} {\bibfnamefont {J.}~\bibnamefont
  {Fiala}}, \bibinfo {author} {\bibfnamefont {M.}~\bibnamefont
  {Ko{\v{c}}vara}},\ and\ \bibinfo {author} {\bibfnamefont {M.}~\bibnamefont
  {Stingl}},\ }\bibfield  {title} {\bibinfo {title} {Penlab: A matlab solver
  for nonlinear semidefinite optimization},\ }\href@noop {} {\bibfield
  {journal} {\bibinfo  {journal} {arXiv preprint arXiv:1311.5240}\ } (\bibinfo
  {year} {2013})}\BibitemShut {NoStop}%
\bibitem [{\citenamefont {Artin}(1927)}]{artin1927zerlegung}%
  \BibitemOpen
  \bibfield  {author} {\bibinfo {author} {\bibfnamefont {E.}~\bibnamefont
  {Artin}},\ }\bibfield  {title} {\bibinfo {title} {{\"U}ber die zerlegung
  definiter funktionen in quadrate},\ }\href@noop {} {\bibfield  {journal}
  {\bibinfo  {journal} {Abhandlungen aus dem Mathematischen Seminar der
  Universit{\"a}t Hamburg}\ }\textbf {\bibinfo {volume} {5}},\ \bibinfo {pages}
  {100} (\bibinfo {year} {1927})}\BibitemShut {NoStop}%
\bibitem [{\citenamefont {Reznick}(1995)}]{Reznick1995}%
  \BibitemOpen
  \bibfield  {author} {\bibinfo {author} {\bibfnamefont {B.}~\bibnamefont
  {Reznick}},\ }\bibfield  {title} {\bibinfo {title} {Uniform denominators in
  hilbert{\textquotesingle}s seventeenth problem},\ }\href
  {https://doi.org/10.1007/bf02572604} {\bibfield  {journal} {\bibinfo
  {journal} {Mathematische Zeitschrift}\ }\textbf {\bibinfo {volume} {220}},\
  \bibinfo {pages} {75} (\bibinfo {year} {1995})}\BibitemShut {NoStop}%
\bibitem [{\citenamefont {Putinar}(1993)}]{Putinar1993}%
  \BibitemOpen
  \bibfield  {author} {\bibinfo {author} {\bibfnamefont {M.}~\bibnamefont
  {Putinar}},\ }\bibfield  {title} {\bibinfo {title} {Positive polynomials on
  compact semi-algebraic sets},\ }\href@noop {} {\bibfield  {journal} {\bibinfo
   {journal} {Indiana University Mathematics Journal}\ }\textbf {\bibinfo
  {volume} {42}},\ \bibinfo {pages} {969} (\bibinfo {year} {1993})}\BibitemShut
  {NoStop}%
\bibitem [{\citenamefont {Putinar}\ and\ \citenamefont
  {Vasilescu}(1999)}]{Putinar1999}%
  \BibitemOpen
  \bibfield  {author} {\bibinfo {author} {\bibfnamefont {M.}~\bibnamefont
  {Putinar}}\ and\ \bibinfo {author} {\bibfnamefont {F.-H.}\ \bibnamefont
  {Vasilescu}},\ }\bibfield  {title} {\bibinfo {title} {Positive polynomials on
  semi-algebraic sets},\ }\href {https://doi.org/10.1016/s0764-4442(99)80251-1}
  {\bibfield  {journal} {\bibinfo  {journal} {Comptes Rendus de
  l{\textquotesingle}Acad{\'{e}}mie des Sciences - Series I - Mathematics}\
  }\textbf {\bibinfo {volume} {328}},\ \bibinfo {pages} {585} (\bibinfo {year}
  {1999})}\BibitemShut {NoStop}%
\bibitem [{\citenamefont {Br{\"a}nd{\'e}n}(2011)}]{BRANDEN20111202}%
  \BibitemOpen
  \bibfield  {author} {\bibinfo {author} {\bibfnamefont {P.}~\bibnamefont
  {Br{\"a}nd{\'e}n}},\ }\bibfield  {title} {\bibinfo {title} {Obstructions to
  determinantal representability},\ }\href
  {https://doi.org/https://doi.org/10.1016/j.aim.2010.08.003} {\bibfield
  {journal} {\bibinfo  {journal} {Advances in Mathematics}\ }\textbf {\bibinfo
  {volume} {226}},\ \bibinfo {pages} {1202} (\bibinfo {year}
  {2011})}\BibitemShut {NoStop}%
\bibitem [{\citenamefont {Fawzi}\ and\ \citenamefont
  {El~Din}(2018)}]{fawzi2018lower}%
  \BibitemOpen
  \bibfield  {author} {\bibinfo {author} {\bibfnamefont {H.}~\bibnamefont
  {Fawzi}}\ and\ \bibinfo {author} {\bibfnamefont {M.~S.}\ \bibnamefont
  {El~Din}},\ }\bibfield  {title} {\bibinfo {title} {A lower bound on the
  positive semidefinite rank of convex bodies},\ }\href@noop {} {\bibfield
  {journal} {\bibinfo  {journal} {SIAM Journal on Applied Algebra and
  Geometry}\ }\textbf {\bibinfo {volume} {2}},\ \bibinfo {pages} {126}
  (\bibinfo {year} {2018})}\BibitemShut {NoStop}%
\bibitem [{\citenamefont {Nie}(2009)}]{Nie2009}%
  \BibitemOpen
  \bibfield  {author} {\bibinfo {author} {\bibfnamefont {J.}~\bibnamefont
  {Nie}},\ }\href {https://doi.org/10.48550/ARXIV.0908.0364} {\bibinfo {title}
  {Polynomial matrix inequality and semidefinite representation}} (\bibinfo
  {year} {2009})\BibitemShut {NoStop}%
\bibitem [{\citenamefont {Zanon}\ and\ \citenamefont
  {Gros}(2022)}]{ZANON2022110287}%
  \BibitemOpen
  \bibfield  {author} {\bibinfo {author} {\bibfnamefont {M.}~\bibnamefont
  {Zanon}}\ and\ \bibinfo {author} {\bibfnamefont {S.}~\bibnamefont {Gros}},\
  }\bibfield  {title} {\bibinfo {title} {A new dissipativity condition for
  asymptotic stability of discounted economic mpc},\ }\href
  {https://doi.org/https://doi.org/10.1016/j.automatica.2022.110287} {\bibfield
   {journal} {\bibinfo  {journal} {Automatica}\ }\textbf {\bibinfo {volume}
  {141}},\ \bibinfo {pages} {110287} (\bibinfo {year} {2022})}\BibitemShut
  {NoStop}%
\bibitem [{\citenamefont {Adams}\ \emph {et~al.}(2004)\citenamefont {Adams},
  \citenamefont {Forrester},\ and\ \citenamefont
  {Glover}}]{adams2004comparisons}%
  \BibitemOpen
  \bibfield  {author} {\bibinfo {author} {\bibfnamefont {W.~P.}\ \bibnamefont
  {Adams}}, \bibinfo {author} {\bibfnamefont {R.~J.}\ \bibnamefont
  {Forrester}},\ and\ \bibinfo {author} {\bibfnamefont {F.~W.}\ \bibnamefont
  {Glover}},\ }\bibfield  {title} {\bibinfo {title} {Comparisons and
  enhancement strategies for linearizing mixed 0-1 quadratic programs},\
  }\href@noop {} {\bibfield  {journal} {\bibinfo  {journal} {Discrete
  Optimization}\ }\textbf {\bibinfo {volume} {1}},\ \bibinfo {pages} {99}
  (\bibinfo {year} {2004})}\BibitemShut {NoStop}%
\bibitem [{\citenamefont {Liberti}(2009)}]{liberti2009reformulations}%
  \BibitemOpen
  \bibfield  {author} {\bibinfo {author} {\bibfnamefont {L.}~\bibnamefont
  {Liberti}},\ }\bibfield  {title} {\bibinfo {title} {Reformulations in
  mathematical programming: Definitions and systematics},\ }\href@noop {}
  {\bibfield  {journal} {\bibinfo  {journal} {RAIRO-Operations Research}\
  }\textbf {\bibinfo {volume} {43}},\ \bibinfo {pages} {55} (\bibinfo {year}
  {2009})}\BibitemShut {NoStop}%
\bibitem [{\citenamefont {Ryoo}\ and\ \citenamefont
  {Sahinidis}(1996)}]{ryoo1996branch}%
  \BibitemOpen
  \bibfield  {author} {\bibinfo {author} {\bibfnamefont {H.~S.}\ \bibnamefont
  {Ryoo}}\ and\ \bibinfo {author} {\bibfnamefont {N.~V.}\ \bibnamefont
  {Sahinidis}},\ }\bibfield  {title} {\bibinfo {title} {A branch-and-reduce
  approach to global optimization},\ }\href@noop {} {\bibfield  {journal}
  {\bibinfo  {journal} {Journal of global optimization}\ }\textbf {\bibinfo
  {volume} {8}},\ \bibinfo {pages} {107} (\bibinfo {year} {1996})}\BibitemShut
  {NoStop}%
\bibitem [{\citenamefont {Lasserre}(2013)}]{Lasserre2013ALR}%
  \BibitemOpen
  \bibfield  {author} {\bibinfo {author} {\bibfnamefont {J.~B.}\ \bibnamefont
  {Lasserre}},\ }\bibfield  {title} {\bibinfo {title} {A lagrangian relaxation
  view of linear and semidefinite hierarchies},\ }\href@noop {} {\bibfield
  {journal} {\bibinfo  {journal} {52nd IEEE Conference on Decision and
  Control}\ ,\ \bibinfo {pages} {1966}} (\bibinfo {year} {2013})}\BibitemShut
  {NoStop}%
\bibitem [{\citenamefont {Henrion}\ and\ \citenamefont
  {Lasserre}(2011)}]{henrion2011inner}%
  \BibitemOpen
  \bibfield  {author} {\bibinfo {author} {\bibfnamefont {D.}~\bibnamefont
  {Henrion}}\ and\ \bibinfo {author} {\bibfnamefont {J.-B.}\ \bibnamefont
  {Lasserre}},\ }\bibfield  {title} {\bibinfo {title} {Inner approximations for
  polynomial matrix inequalities and robust stability regions},\ }\href@noop {}
  {\bibfield  {journal} {\bibinfo  {journal} {IEEE Transactions on Automatic
  Control}\ }\textbf {\bibinfo {volume} {57}},\ \bibinfo {pages} {1456}
  (\bibinfo {year} {2011})}\BibitemShut {NoStop}%
\bibitem [{\citenamefont {Benzi}\ \emph {et~al.}(2005)\citenamefont {Benzi},
  \citenamefont {Golub},\ and\ \citenamefont {Liesen}}]{benzi2005numerical}%
  \BibitemOpen
  \bibfield  {author} {\bibinfo {author} {\bibfnamefont {M.}~\bibnamefont
  {Benzi}}, \bibinfo {author} {\bibfnamefont {G.~H.}\ \bibnamefont {Golub}},\
  and\ \bibinfo {author} {\bibfnamefont {J.}~\bibnamefont {Liesen}},\
  }\bibfield  {title} {\bibinfo {title} {Numerical solution of saddle point
  problems},\ }\href@noop {} {\bibfield  {journal} {\bibinfo  {journal} {Acta
  numerica}\ }\textbf {\bibinfo {volume} {14}},\ \bibinfo {pages} {1} (\bibinfo
  {year} {2005})}\BibitemShut {NoStop}%
\bibitem [{\citenamefont {Nie}\ \emph {et~al.}(2021)\citenamefont {Nie},
  \citenamefont {Yang},\ and\ \citenamefont {Zhou}}]{nie2021saddle}%
  \BibitemOpen
  \bibfield  {author} {\bibinfo {author} {\bibfnamefont {J.}~\bibnamefont
  {Nie}}, \bibinfo {author} {\bibfnamefont {Z.}~\bibnamefont {Yang}},\ and\
  \bibinfo {author} {\bibfnamefont {G.}~\bibnamefont {Zhou}},\ }\bibfield
  {title} {\bibinfo {title} {The saddle point problem of polynomials},\
  }\href@noop {} {\bibfield  {journal} {\bibinfo  {journal} {Foundations of
  Computational Mathematics}\ ,\ \bibinfo {pages} {1}} (\bibinfo {year}
  {2021})}\BibitemShut {NoStop}%
\bibitem [{\citenamefont {Armentano}\ \emph {et~al.}(2018)\citenamefont
  {Armentano}, \citenamefont {Beltr{\'{a}}n}, \citenamefont {B\"{u}rgisser},
  \citenamefont {Cucker},\ and\ \citenamefont {Shub}}]{Armentano2018}%
  \BibitemOpen
  \bibfield  {author} {\bibinfo {author} {\bibfnamefont {D.}~\bibnamefont
  {Armentano}}, \bibinfo {author} {\bibfnamefont {C.}~\bibnamefont
  {Beltr{\'{a}}n}}, \bibinfo {author} {\bibfnamefont {P.}~\bibnamefont
  {B\"{u}rgisser}}, \bibinfo {author} {\bibfnamefont {F.}~\bibnamefont
  {Cucker}},\ and\ \bibinfo {author} {\bibfnamefont {M.}~\bibnamefont {Shub}},\
  }\bibfield  {title} {\bibinfo {title} {A stable, polynomial-time algorithm
  for the eigenpair problem},\ }\href {https://doi.org/10.4171/jems/789}
  {\bibfield  {journal} {\bibinfo  {journal} {Journal of the European
  Mathematical Society}\ }\textbf {\bibinfo {volume} {20}},\ \bibinfo {pages}
  {1375} (\bibinfo {year} {2018})}\BibitemShut {NoStop}%
\bibitem [{\citenamefont {Vaswani}\ \emph {et~al.}(2017)\citenamefont
  {Vaswani}, \citenamefont {Shazeer}, \citenamefont {Parmar}, \citenamefont
  {Uszkoreit}, \citenamefont {Jones}, \citenamefont {Gomez}, \citenamefont
  {Kaiser},\ and\ \citenamefont {Polosukhin}}]{Vaswani2017}%
  \BibitemOpen
  \bibfield  {author} {\bibinfo {author} {\bibfnamefont {A.}~\bibnamefont
  {Vaswani}}, \bibinfo {author} {\bibfnamefont {N.}~\bibnamefont {Shazeer}},
  \bibinfo {author} {\bibfnamefont {N.}~\bibnamefont {Parmar}}, \bibinfo
  {author} {\bibfnamefont {J.}~\bibnamefont {Uszkoreit}}, \bibinfo {author}
  {\bibfnamefont {L.}~\bibnamefont {Jones}}, \bibinfo {author} {\bibfnamefont
  {A.~N.}\ \bibnamefont {Gomez}}, \bibinfo {author} {\bibfnamefont
  {L.}~\bibnamefont {Kaiser}},\ and\ \bibinfo {author} {\bibfnamefont
  {I.}~\bibnamefont {Polosukhin}},\ }\bibfield  {title} {\bibinfo {title}
  {Attention is all you need},\ }in\ \href
  {https://proceedings.neurips.cc/paper/2017/file/3f5ee243547dee91fbd053c1c4a845aa-Paper.pdf}
  {\emph {\bibinfo {booktitle} {Advances in Neural Information Processing
  Systems}}},\ Vol.~\bibinfo {volume} {30},\ \bibinfo {editor} {edited by\
  \bibinfo {editor} {\bibfnamefont {I.}~\bibnamefont {Guyon}}, \bibinfo
  {editor} {\bibfnamefont {U.~V.}\ \bibnamefont {Luxburg}}, \bibinfo {editor}
  {\bibfnamefont {S.}~\bibnamefont {Bengio}}, \bibinfo {editor} {\bibfnamefont
  {H.}~\bibnamefont {Wallach}}, \bibinfo {editor} {\bibfnamefont
  {R.}~\bibnamefont {Fergus}}, \bibinfo {editor} {\bibfnamefont
  {S.}~\bibnamefont {Vishwanathan}},\ and\ \bibinfo {editor} {\bibfnamefont
  {R.}~\bibnamefont {Garnett}}}\ (\bibinfo  {publisher} {Curran Associates,
  Inc.},\ \bibinfo {year} {2017})\BibitemShut {NoStop}%
\bibitem [{\citenamefont {Burer}\ and\ \citenamefont
  {Monteiro}(2003)}]{Burer2003}%
  \BibitemOpen
  \bibfield  {author} {\bibinfo {author} {\bibfnamefont {S.}~\bibnamefont
  {Burer}}\ and\ \bibinfo {author} {\bibfnamefont {R.~D.}\ \bibnamefont
  {Monteiro}},\ }\bibfield  {title} {\bibinfo {title} {A nonlinear programming
  algorithm for solving semidefinite programs via low-rank factorization},\
  }\href {https://doi.org/10.1007/s10107-002-0352-8} {\bibfield  {journal}
  {\bibinfo  {journal} {Mathematical Programming}\ }\textbf {\bibinfo {volume}
  {95}},\ \bibinfo {pages} {329} (\bibinfo {year} {2003})}\BibitemShut
  {NoStop}%
\bibitem [{\citenamefont {Burer}\ and\ \citenamefont
  {Monteiro}(2005)}]{burer2005local}%
  \BibitemOpen
  \bibfield  {author} {\bibinfo {author} {\bibfnamefont {S.}~\bibnamefont
  {Burer}}\ and\ \bibinfo {author} {\bibfnamefont {R.~D.}\ \bibnamefont
  {Monteiro}},\ }\bibfield  {title} {\bibinfo {title} {Local minima and
  convergence in low-rank semidefinite programming},\ }\href@noop {} {\bibfield
   {journal} {\bibinfo  {journal} {Mathematical programming}\ }\textbf
  {\bibinfo {volume} {103}},\ \bibinfo {pages} {427} (\bibinfo {year}
  {2005})}\BibitemShut {NoStop}%
\bibitem [{\citenamefont {Journ{\'e}e}\ \emph {et~al.}(2008)\citenamefont
  {Journ{\'e}e}, \citenamefont {Bach}, \citenamefont {Absil},\ and\
  \citenamefont {Sepulchre}}]{journee2008low}%
  \BibitemOpen
  \bibfield  {author} {\bibinfo {author} {\bibfnamefont {M.}~\bibnamefont
  {Journ{\'e}e}}, \bibinfo {author} {\bibfnamefont {F.}~\bibnamefont {Bach}},
  \bibinfo {author} {\bibfnamefont {P.-A.}\ \bibnamefont {Absil}},\ and\
  \bibinfo {author} {\bibfnamefont {R.}~\bibnamefont {Sepulchre}},\ }\bibfield
  {title} {\bibinfo {title} {Low-rank optimization for semidefinite convex
  problems},\ }\href@noop {} {\bibfield  {journal} {\bibinfo  {journal} {arXiv
  preprint arXiv:0807.4423}\ } (\bibinfo {year} {2008})}\BibitemShut {NoStop}%
\bibitem [{\citenamefont {Liu}\ \emph {et~al.}(2017)\citenamefont {Liu},
  \citenamefont {Liddell}, \citenamefont {Mare{\v{c}}ek},\ and\ \citenamefont
  {Tak{\'a}{\v{c}}}}]{liu2017hybrid}%
  \BibitemOpen
  \bibfield  {author} {\bibinfo {author} {\bibfnamefont {J.}~\bibnamefont
  {Liu}}, \bibinfo {author} {\bibfnamefont {A.~C.}\ \bibnamefont {Liddell}},
  \bibinfo {author} {\bibfnamefont {J.}~\bibnamefont {Mare{\v{c}}ek}},\ and\
  \bibinfo {author} {\bibfnamefont {M.}~\bibnamefont {Tak{\'a}{\v{c}}}},\
  }\bibfield  {title} {\bibinfo {title} {Hybrid methods in solving
  alternating-current optimal power flows},\ }\href@noop {} {\bibfield
  {journal} {\bibinfo  {journal} {IEEE Transactions on Smart Grid}\ }\textbf
  {\bibinfo {volume} {8}},\ \bibinfo {pages} {2988} (\bibinfo {year}
  {2017})}\BibitemShut {NoStop}%
\bibitem [{\citenamefont {Boumal}\ \emph {et~al.}(2016)\citenamefont {Boumal},
  \citenamefont {Voroninski},\ and\ \citenamefont {Bandeira}}]{boumal2016non}%
  \BibitemOpen
  \bibfield  {author} {\bibinfo {author} {\bibfnamefont {N.}~\bibnamefont
  {Boumal}}, \bibinfo {author} {\bibfnamefont {V.}~\bibnamefont {Voroninski}},\
  and\ \bibinfo {author} {\bibfnamefont {A.}~\bibnamefont {Bandeira}},\
  }\bibfield  {title} {\bibinfo {title} {The non-convex {B}urer-{M}onteiro
  approach works on smooth semidefinite programs},\ }\href@noop {} {\bibfield
  {journal} {\bibinfo  {journal} {Advances in Neural Information Processing
  Systems}\ }\textbf {\bibinfo {volume} {29}} (\bibinfo {year}
  {2016})}\BibitemShut {NoStop}%
\bibitem [{\citenamefont {Boumal}\ \emph {et~al.}(2020)\citenamefont {Boumal},
  \citenamefont {Voroninski},\ and\ \citenamefont
  {Bandeira}}]{boumal2020deterministic}%
  \BibitemOpen
  \bibfield  {author} {\bibinfo {author} {\bibfnamefont {N.}~\bibnamefont
  {Boumal}}, \bibinfo {author} {\bibfnamefont {V.}~\bibnamefont {Voroninski}},\
  and\ \bibinfo {author} {\bibfnamefont {A.~S.}\ \bibnamefont {Bandeira}},\
  }\bibfield  {title} {\bibinfo {title} {Deterministic guarantees for
  burer-monteiro factorizations of smooth semidefinite programs},\ }\href@noop
  {} {\bibfield  {journal} {\bibinfo  {journal} {Communications on Pure and
  Applied Mathematics}\ }\textbf {\bibinfo {volume} {73}},\ \bibinfo {pages}
  {581} (\bibinfo {year} {2020})}\BibitemShut {NoStop}%
\bibitem [{\citenamefont {Naldi}(2015)}]{naldi2015exact}%
  \BibitemOpen
  \bibfield  {author} {\bibinfo {author} {\bibfnamefont {S.}~\bibnamefont
  {Naldi}},\ }\emph {\bibinfo {title} {Exact algorithms for determinantal
  varieties and semidefinite programming}},\ \href@noop {} {Ph.D. thesis},\
  \bibinfo  {school} {Toulouse, INSA} (\bibinfo {year} {2015})\BibitemShut
  {NoStop}%
\bibitem [{\citenamefont {Henrion}\ \emph {et~al.}(2019)\citenamefont
  {Henrion}, \citenamefont {Naldi},\ and\ \citenamefont {Din}}]{Din2016}%
  \BibitemOpen
  \bibfield  {author} {\bibinfo {author} {\bibfnamefont {D.}~\bibnamefont
  {Henrion}}, \bibinfo {author} {\bibfnamefont {S.}~\bibnamefont {Naldi}},\
  and\ \bibinfo {author} {\bibfnamefont {M.~S.~E.}\ \bibnamefont {Din}},\
  }\bibfield  {title} {\bibinfo {title} {Spectra – a maple library for
  solving linear matrix inequalities in exact arithmetic},\ }\href
  {https://doi.org/10.1080/10556788.2017.1341505} {\bibfield  {journal}
  {\bibinfo  {journal} {Optimization Methods and Software}\ }\textbf {\bibinfo
  {volume} {34}},\ \bibinfo {pages} {62} (\bibinfo {year} {2019})},\ \Eprint
  {https://arxiv.org/abs/https://doi.org/10.1080/10556788.2017.1341505}
  {https://doi.org/10.1080/10556788.2017.1341505} \BibitemShut {NoStop}%
\bibitem [{\citenamefont {Naldi}(2018)}]{naldi2018solving}%
  \BibitemOpen
  \bibfield  {author} {\bibinfo {author} {\bibfnamefont {S.}~\bibnamefont
  {Naldi}},\ }\bibfield  {title} {\bibinfo {title} {Solving rank-constrained
  semidefinite programs in exact arithmetic},\ }\href@noop {} {\bibfield
  {journal} {\bibinfo  {journal} {Journal of Symbolic Computation}\ }\textbf
  {\bibinfo {volume} {85}},\ \bibinfo {pages} {206} (\bibinfo {year}
  {2018})}\BibitemShut {NoStop}%
\bibitem [{\citenamefont {Henrion}\ \emph {et~al.}(2021)\citenamefont
  {Henrion}, \citenamefont {Naldi},\ and\ \citenamefont
  {El~Din}}]{henrion2021exact}%
  \BibitemOpen
  \bibfield  {author} {\bibinfo {author} {\bibfnamefont {D.}~\bibnamefont
  {Henrion}}, \bibinfo {author} {\bibfnamefont {S.}~\bibnamefont {Naldi}},\
  and\ \bibinfo {author} {\bibfnamefont {M.~S.}\ \bibnamefont {El~Din}},\
  }\bibfield  {title} {\bibinfo {title} {Exact algorithms for semidefinite
  programs with degenerate feasible set},\ }\href@noop {} {\bibfield  {journal}
  {\bibinfo  {journal} {Journal of Symbolic Computation}\ }\textbf {\bibinfo
  {volume} {104}},\ \bibinfo {pages} {942} (\bibinfo {year}
  {2021})}\BibitemShut {NoStop}%
\bibitem [{\citenamefont {Boyd}\ and\ \citenamefont
  {Vandenberghe}(2004)}]{Boyd2004}%
  \BibitemOpen
  \bibfield  {author} {\bibinfo {author} {\bibfnamefont {S.}~\bibnamefont
  {Boyd}}\ and\ \bibinfo {author} {\bibfnamefont {L.}~\bibnamefont
  {Vandenberghe}},\ }\href {https://doi.org/10.1017/cbo9780511804441} {\emph
  {\bibinfo {title} {Convex Optimization}}}\ (\bibinfo  {publisher} {Cambridge
  University Press},\ \bibinfo {year} {2004})\BibitemShut {NoStop}%
\bibitem [{\citenamefont {Amelunxen}\ and\ \citenamefont
  {B{\"u}rgisser}(2015)}]{amelunxen2015intrinsic}%
  \BibitemOpen
  \bibfield  {author} {\bibinfo {author} {\bibfnamefont {D.}~\bibnamefont
  {Amelunxen}}\ and\ \bibinfo {author} {\bibfnamefont {P.}~\bibnamefont
  {B{\"u}rgisser}},\ }\bibfield  {title} {\bibinfo {title} {Intrinsic volumes
  of symmetric cones and applications in convex programming},\ }\href@noop {}
  {\bibfield  {journal} {\bibinfo  {journal} {Mathematical Programming}\
  }\textbf {\bibinfo {volume} {149}},\ \bibinfo {pages} {105} (\bibinfo {year}
  {2015})}\BibitemShut {NoStop}%
\bibitem [{\citenamefont {Waldspurger}\ and\ \citenamefont
  {Waters}(2020)}]{Waldspurger2020}%
  \BibitemOpen
  \bibfield  {author} {\bibinfo {author} {\bibfnamefont {I.}~\bibnamefont
  {Waldspurger}}\ and\ \bibinfo {author} {\bibfnamefont {A.}~\bibnamefont
  {Waters}},\ }\bibfield  {title} {\bibinfo {title} {Rank optimality for the
  {Burer--Monteiro} factorization},\ }\href
  {https://doi.org/10.1137/19M1255318} {\bibfield  {journal} {\bibinfo
  {journal} {SIAM Journal on Optimization}\ }\textbf {\bibinfo {volume} {30}},\
  \bibinfo {pages} {2577} (\bibinfo {year} {2020})},\ \Eprint
  {https://arxiv.org/abs/https://doi.org/10.1137/19M1255318}
  {https://doi.org/10.1137/19M1255318} \BibitemShut {NoStop}%
\end{thebibliography}%

\end{document}